\theoremstyle{plain}
\newtheorem{definition}{Definition}[section]
\newtheorem{lemma}{Lemma}[section]
\newtheorem{corollary}{Corollary}[section]
\newtheorem{proposition}{Proposition}[section]
\newtheorem{example}{Example}[section]
\newtheorem{theorem}{Theorem}[section]
\DeclareMathOperator{\LCP}{LCP}
\DeclareMathOperator{\StLCP}{SLCP}
\DeclareMathOperator{\SStLCP}{SOL-SLCP}
\DeclareMathOperator{\StMCP}{SMixCP}
\DeclareMathOperator{\SStMCP}{SOL-SMixCP}
\DeclareMathOperator{\C}{\mathcal C}
\newcommand{\lf}{\left}
\newcommand{\rg}{\right}
\newcommand{\R}{\mathbb R}
\newcommand{\E}{\mathbb E}
\newcommand{\tp}{^\top}
\newcommand{\bs}{\left(\begin{smallmatrix}}
\newcommand{\es}{\end{smallmatrix}\right)}
\begin{document}

\title{Stochastic Linear Complementarity Problems on Extended Second Order Cones
}
\author{S. Z. N\'emeth\\School of Mathematics, The University of Birmingham\\The Watson Building, Edgbaston\\Birmingham B15 2TT, United Kingdom\\email: s.nemeth@bham.ac.uk
\and L. Xiao\\School of Mathematics, The University of Birmingham\\The Watson Building, Edgbaston\\Birmingham B15 2TT, United Kingdom\\email: Lxx490@bham.ac.uk}
\maketitle
\paragraph{}

\begin{abstract}
In this paper, we study the stochastic linear complementarity problems on
extended second order cones (stochastic ESOCLCP). We first convert the
problem to a stochastic mixed complementarity problem on the nonegative
orthant (SMixCP). Enlightened by the idea of Chen and Lin(2011), we
introduce the Conditional Value-at-risk (CVaR) method to measure the loss
of complementarity in the stochastic case. A CVaR - based minimisation
problem is introduced to achieve a solution which is "good enough" for
the complementarity requirement of the original SMixCP. Smoothing function
and sample average approximation methods are introduced and the the
problem is converted to a form which can be solved by Levenberg-Marquardt
smoothing SAA algorithm. At the end of the paper a numerical example
illustrates our results.

\end{abstract}

\section{Introduction}

Uncertainty is a common and realistic problem that results from inaccurate measurement or stochastic variation of data such as price, capacities, loads, etc. In fact, the inaccuracy or uncertainty of these real-world data are inevitable. When these data are applied as parameters in mathematical models, the constraints of models may be violated because of their stochastic characters. These violations may finally cause some difficulties in the sense that the optimal solutions obtained from the stochastic data are no longer optimal, or even infeasible. Amongst approaches proposed for modeling uncertain quantities, the stochastic models outstand because of their solid mathematical foundations, theoretical richness, and sound techniques of using real data. Complementarity problems imbedded with stochastic models occur in many areas such as finance, telecommunication and engineering. Hence, considering $\LCP$ with uncertainty will be meaningful for practical treatments. If partial or all of the coefficients in the $\LCP$ are uncertain, the $\LCP$ will be turned into a stochastic linear complementarity problem ($\StLCP$), which is firstly introduced by Chen and Fukushima \cite{chen2005expected}. Articles about $\StLCP$ can be found in \cite{fang2007stochastic,gurkan1999sample,lin2009combined,chen2011cvar}.

Even though the fact that only limited number of results have been obtained on the stochastic complementarity problems, there are still some meaningful results. One of them is the CVaR (conditional value-at-risk, which is also called expected shortfall) minimisation reformulation of stochastic complementarity problem \cite{xu2014cvar}. In this study, the stochastic linear complementarity problem on extended second order cones (S-ESOCLCP) will be introduced. Based on the results in \cite{nemeth2017linear}, a method of finding solutions to S-ESOCLCP will be elaborated, and a numerical example will be presented.
~\\

\section{Preliminaries}
~\\

For $\R^k$, the Euclidian space whose elements are column vectors, the definition of an inner product $\langle \cdot, \cdot \rangle$ is given by
\[
    \langle x, y\rangle =\sum\limits_{i=1}^n x_i y_i, \quad x, y \in \R^n.
\]

Let $k$, $l$ be positive integers. The inner product of pairs of vectors $\bs x\\u\es \in \R^k\times\R^{\ell}$, where $x\in\R^k$ and $u\in\R^{\ell}$, is given by
\[
    \langle\bs x\\u\es ,\bs y\\v\es \rangle =\langle x,y\rangle + \langle u,v\rangle .
\]

Let $\R^n$ be a Euclidian space.  A set $K \subseteq\R^n$ is called a \emph{convex cone} if for any $\alpha , \beta > 0$, and $x, y\in K$, we have
\[
    \alpha x + \beta y \in K.
\]
In other words, a convex cone is a set which is invariant with respect to the multiplication of vectors with positive scalars. The {\it dual cone} of a convex cone ${K} \subseteq\R^n$ is
\[
    {K}^*\!\!:=\!\{ x\in \R^n : \langle x, y \rangle\!\geq\! 0, ~ \forall \, y\!\in\! {K}\}.
\]
which is also a convex cone. A convex cone \( {K} \subseteq\R^{n}\)  is called {\it pointed} if \({ K} \cap \{-{K}\} \subseteq \{0\}\), or equivalently, if \({K}\) does not contain straight  lines through the origin. A convex cone which is a closed set is called a \emph{closed convex cone}. Any pointed closed convex cone with nonempty interior will be called {\it proper cone}. The cone ${K}$ is called {\it subdual} if ${K}\subseteq {K}^*$, {\it superdual} if ${K}^*\subseteq {K}$, and {\it self-dual} if ${K}^*={K}$.

\begin{definition}
    An indicator function $\mathbf{1}_{I}\lf(\theta\rg)$
    is defined as
    \[
        \bold{1}_{I}\lf(\theta\rg)=
        \left\{
    		\begin{array}{l}
                1 \qquad if\; \theta\in I \\
                0 \qquad otherwise.
            \end{array}
    		\right.
    \]
\end{definition}

\begin{definition}[Complementarity Set]\label{def:cs}
     Let $K\subseteq\R^m$ be a nonempty closed convex cone and $K^*$ its dual\cite{MR0321540}. The set
    \[
        \C(K):=\left\{(x,y)\in K\times K^*:\langle x, y\rangle = 0\right\}
    \]
    is called the \emph{complementarity set} of $K$.
\end{definition}
Denote by $\|\cdot\|$ the corresponding Euclidean norm. Recall the definitions of the mutually dual extended second order cone \cite{NZ20151} $L(k,\ell)$ and $M(k,\ell)$ in $\mathbb{R}^n\equiv\R^k\times\R^\ell$:
\begin{equation}\label{elc}
L(k,\ell) = \{(x,u) \in \mathbb{R}^k\times \mathbb{R}^\ell : x \geq \|u\|e\},
\end{equation}
\begin{equation}\label{delc}
M(k,\ell) = \{(x,u) \in \mathbb{R}^k\times \mathbb{R}^\ell : e\tp x\geq \|u\|,x\ge0\},
\end{equation}
where  $e=(1, \dots, 1)\tp \in \mathbb{R}^k $. If there is no ambiguity about the dimensions, then we simply denote $L(k,\ell)$ and $M(k,\ell)$ by
$L$ and $M$, respectively. The following proposition is from \cite{nemeth2017linear}. We restate it here for convenience.

\begin{proposition}\label{prop:cs-esoc}
	Let $x,y\in\R^k$ and $u,v\in\R^\ell\setminus\{ 0\}$.
	\begin{enumerate}
		\item[(i)] $(x,0,y,0)\in\C(L)$ if and only if $(x,y)\in\C(\R^k_+)$.
		\item[(ii)] $(x,0,y,v)\in\C(L)$ if and only if $e\tp y\ge\|v\|$ and $(x,y)\in\C(\R^k_+)$.
		\item[(iii)] $(x,u,y,0)\in\C(L)$ if and only if $x\ge\|u\|e$ and $(x,y)\in\C(\R^k_+)$.
        \item[(iv)] $(x,u,y,v):=(\bs x\\u\es,\bs y\\v\es)\in\C(L)$ if and only if there exists $\lambda>0$ such that $v=-\lambda u$,
			$e\tp y=\|v\|$ and $(x-\|u\|e,y)\in C(\R^k_+)$.
	\end{enumerate}
\end{proposition}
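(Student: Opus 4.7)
The plan is to unwind each case by writing out explicitly what the membership $(x,u)\in L$ and $(y,v)\in M$ mean, expanding the inner product $\langle(x,u),(y,v)\rangle=\langle x,y\rangle+\langle u,v\rangle$, and then organising the orthogonality condition into a sum of manifestly non-negative terms that must simultaneously vanish. Cases (i)--(iii) are essentially direct: when $u=0$ the condition $(x,0)\in L$ reduces to $x\ge 0$; when $v=0$ the condition $(y,0)\in M$ reduces to $y\ge 0$; and in all three of these cases the inner product collapses to $\langle x,y\rangle$, so $\langle x,y\rangle=0$ together with $x,y\ge 0$ is exactly $(x,y)\in\C(\R^k_+)$. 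The extra inequalities $e\tp y\ge\|v\|$ in (ii) and $x\ge\|u\|e$ in (iii) are carried along purely from the cone membership conditions.

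The substantive case is (iv), where $u,v\neq 0$. Here I would introduce the key decomposition $x=\|u\|e+(x-\|u\|e)$ with $x-\|u\|e\ge 0$ (from $L$-membership), and rewrite
\[
0=\langle x,y\rangle+\langle u,v\rangle=\|u\|\,e\tp y+\langle x-\|u\|e,\,y\rangle+\langle u,v\rangle.
\]
Then I bound each of the three summands from below: $\|u\|\,e\tp y\ge \|u\|\|v\|$ using $e\tp y\ge\|v\|$ from $M$-membership; $\langle x-\|u\|e,y\rangle\ge 0$ since both factors are non-negative; and $\langle u,v\rangle\ge -\|u\|\|v\|$ by Cauchy--Schwarz. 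The three lower bounds sum to $0$, so the equality $\langle x,y\rangle+\langle u,v\rangle=0$ forces each bound to be attained.

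Reading off the equality conditions gives the three claimed properties: from $\|u\|\,e\tp y=\|u\|\|v\|$ and $\|u\|>0$ we obtain $e\tp y=\|v\|$; from $\langle x-\|u\|e,y\rangle=0$, combined with $x-\|u\|e\ge 0$ and $y\ge 0$, we obtain $(x-\|u\|e,y)\in\C(\R^k_+)$; and the Cauchy--Schwarz equality case with $u,v\neq 0$ gives $v=-\lambda u$ for a unique $\lambda>0$ (the sign is fixed by the negative sign of the inner product). Conversely, assuming the three conditions one simply substitutes them back into the expansion above and checks that the three summands indeed sum to zero, yielding $\langle(x,u),(y,v)\rangle=0$; the conditions also trivially imply $(x,u)\in L$ and $(y,v)\in M$.

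The only point requiring care is the Cauchy--Schwarz equality case: to conclude $v=-\lambda u$ with $\lambda>0$ (rather than merely $v=\lambda u$ for some real $\lambda$), one must use the sign $\langle u,v\rangle=-\|u\|\|v\|<0$ together with $u,v\neq 0$. This is the main conceptual step; the rest is bookkeeping.
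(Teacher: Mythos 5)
Your argument is correct and complete: in cases (i)--(iii) the memberships in $L$ and $M$ collapse as you say, and in case (iv) the decomposition $\langle x,y\rangle+\langle u,v\rangle=\langle x-\|u\|e,y\rangle+\|u\|e\tp y+\langle u,v\rangle$ together with the three lower bounds and the Cauchy--Schwarz equality case (with the sign forcing $\lambda>0$) is exactly the standard argument. Note that the present paper states Proposition~\ref{prop:cs-esoc} without proof, citing \cite{nemeth2017linear}; your proof coincides in essence with the one given there, so there is nothing to add.
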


The linear complementarity problem $\LCP(F,L)$ defined by a convex cone $K$ and a linear function $F(x,u)$ is given by
\begin{equation}\label{que:lcp}
        \LCP(F,L)\left\{
        \begin{array}{l}
			Find \; \bs x\\u\es,\; such \; that\\
			\lf( \bs x\\u\es, F(x,u) \rg) \in \C(K)
		\end{array}
        \right.
\end{equation}
where $F(x,u) = T\bs x\\u\es +r$, $T=\bs A & B\\C & D \es$, $r = \bs p \\ q\es$.

\section{The problem formulation}

Let $(\Omega, \mathcal{F},\mathcal{P})$ be a probability space defined by:

\begin{enumerate}
  \item $\Omega\subseteq\R^{n}$, the sample set of possible outcomes;
  \item $\mathcal{F}\subseteq2^{\Omega}$, a $\sigma$-algebra generated by $\Omega$ (or a collection of all subsets of $\Omega$); and
  \item $\mathcal{P}: \mathcal{F}\rightarrow [0,1]$, a function maps from events to probabilities.
\end{enumerate}

The stochastic linear complementarity problem $SLCP(F,L,\omega)$ defined by a convex cone $L$ and a linear function $F(x,u,\omega)$ is given by
\begin{equation}\label{queslcp}
    SLCP(F, K,\omega)\left\{
    \begin{array}{l}
    	Find \; \bs x\\u\es,\;  such \; that\\
    	 \lf( \bs x\\u\es, F(x,u,\omega) \rg)\in \C(K) , \omega\in \Omega, \quad a.s.
    \end{array}
    \right.
\end{equation}
where $F(x,u,\omega) = T(\omega)\bs x\\u\es+r(\omega): \R^m \times \Omega \rightarrow \R^m$, $\omega\in\Omega$ is an $n$-dimensional random vector. The notation ``a.s." is the abbreviation of ``alomst surely", which means
\[
    \mathcal{P}\{\omega \in \Omega; x\in \R^m_+, F(x,\omega) \ge 0, x\tp F(x,\omega)=0\} = 1
\]

In this study, we assume that the coefficients $T(\omega)$ and $r(\omega)$ are measurable functions of $\omega$ with the following property:
\[
    \E[\|T(\omega)\tp T(\omega)\|] < \infty\quad and \quad \E[\|r(\omega)\|] < \infty
\]
where $\E[\cdot]$ represents the expected value of the random vector in the square bracket.

It should be noted that if the possible outcome set $\Omega$ contains only one single realisation (and this unique outcome definitely happens), problem (\ref{queslcp}) will degenerate to (\ref{que:lcp}).

The stochastic linear complementarity problems are very useful in solving practical problems. However, because of the existence of the random vector $\omega$ in the function $F (x, \omega)$, it is very difficult and sometimes impossible of finding a solution $x$ to satisfy almost all possible outcomes of $\omega$. One plausible idea to improve the viability of finding a solution to SLCP is to associate the problems with probability models, and then persuasive solutions to SLCP are obtainable by finding the solutions to the associated probability models.

Xu and Yu \cite{xu2014cvar} summarised 6 different probability models for finding solutions to SLCP:


\begin{enumerate}
    \item[(i)] \textbf{Expected value (EV) method, introduced by G{\"u}rkan et. al in \cite{gurkan1999sample}}. By using the expectation value $\E[F(x,\omega)]$ to replace the stochastic term $F(x,\omega)$, this method ultimately reformulates (\ref{queslcp}) to (\ref{que:lcp}).

    \item[(ii)] \textbf{Expected residual minimisation (ERM) method, introduced by Chen and Fukushima \cite{chen2005expected}}. This method minimises the expectation of the square norm of the residual $\Phi(x,\omega)$ defined by the following C-function:
        \begin{equation}\label{CF_max}
            \min_{x\in\R^n_+}\E \left[ \|\Phi(x,\omega)\|^2\right]
        \end{equation}
        where $\Phi:\R^m \times\Omega\rightarrow\R^m$ is a multi dimensional C-function defined as
        \[
            \Phi(x,\omega):= \lf(\phi\lf(x_1,F_1(x,\omega)\rg),\dots,\phi\lf(x_m,F_m(x,\omega)\rg)\rg)\tp .
        \]
        where $\phi : \R\times\R \rightarrow\R$ can be any scalar C-function satisfying:
        \[
            \phi(a,b) = 0 \quad \Leftrightarrow \quad a\ge 0,\quad b\ge 0, \quad ab = 0.
        \]

    \item[(iii)] \textbf{Stochastic mathematical programs with equilibrium constraints (SMPEC) reformulation, introduced by Lin and Fukushima\cite{lin2009combined, lin2009solving, mataramvura2008risk}}. This method highlights a recourse variate $z(\omega)$ to compensate the violation of complementarity in (\ref{queslcp}) for some outcomes of $\omega\in\Omega$, then it reformulates (\ref{queslcp}) to the following model:
        \begin{equation}\label{que:SMPEC}
         \begin{array}{lll}
        & \min\limits_x & \E\left[\eta\lf(z(\omega)\rg)\right] \\
        & s.t.    & 0\le x\perp \lf(F(x,\omega) + z(\omega)\rg) \ge 0, \\
        &          & z(\omega) \ge 0, \omega\in \Omega\quad a.s.,
         \end{array}
        \end{equation}
        where $\eta(z) = e^{tp}z$. Ambiguous solutions to SCP can be obtained by minimising the objective function in (\ref{que:SMPEC}), i.e. the expected value of the compensation to the violation of complementarity in (\ref{queslcp}).

    \item[(iv)] \textbf{Stochastic programming (SP) reformation \cite{wang2010stochastic}}. Problem (\ref{queslcp}) is reformulated to the following:
        \[
        \begin{array}{lll}
            & \min\limits_{x} & \E\left[\|\lf(x \circ F(x,\omega)\rg)_+\|^2 \right]  \\
            & s.t.           & F(x,\omega) \ge 0, \quad \omega \in \Omega \quad a.s. \\
            &                 & x\ge 0.
        \end{array}
        \]
        where $x_+:=\max\{x,0\}$, and $x\circ F(x,\omega)$ is the Hadamard product of $x$ and $F(x,\omega)$.

    \item[(v)] \textbf{Robust Optimisation \cite{ben2002robust, ben2009robust}, which is a deterministic reformation of  (\ref{queslcp})}. And,

    \item[(vi)] \textbf{CVaR minimisation (CM) reformulation \cite{chen2011cvar}}.  By using this method, (\ref{queslcp}) is reformulated to a problem that minimises the CVaR  of the norm of the loss function $\theta(x,\omega)$, namely:
        \[
            \min_{x\in\R^n} CVaR_{\alpha}\lf(\| \theta(x,\omega)\|\rg).
        \]

\end{enumerate}

The reformulation in item (vi) uses the CVaR, a measure of risk widely applied in financial industry. CVaR was built based on Value at risk (VaR)\cite{rockafellar2000optimization,meucci2009risk}. Let $ \omega\in\Omega$ be a vector with random outcomes and let $\theta(x,\omega): \R^m\times\Omega \rightarrow \R$ be a mapping, the VaR of $\omega$ for the loss function is defined as:

\begin{equation}\label{VaR}
    VaR_\alpha(\theta(x,\omega)) =  min\{\Theta | \mathcal{P}[\theta(x,\omega)\ge \Theta] \le \alpha\}.
\end{equation}
where $\mathcal{P}[\cdot]\in[0,1]$ is the probability of the event in the square bracket. We call $\theta(x,\omega)$ the loss function. The probability (also called confidence level) $\alpha \in (0,1)$ quantifies the proportion of ``worst cases" (that is, $\theta(x,\omega)\ge VaR_\alpha(\theta(x,\omega))=\Theta$) in the group of all outcomes, and the other outcomes ($\theta(x,\omega)< \Theta$) would happens with probability $1-\alpha$.

Based on the definition of VaR, CVaR is defined as:

\begin{align}
    CVaR_\alpha(\theta(x,\omega)) & = \frac{1}{\alpha}\E\lf[\theta(x,\omega)\bold{1}_{[ VaR_{\alpha}\lf(\theta(x,\omega)\rg),+\infty)} \lf(\theta(x,\omega)\rg)\rg] \label{que:var2cvarexp}\\
        & = \frac{1}{\alpha}\int_{\theta(x,\omega)\ge VaR_{\alpha}(\theta(x,\omega))} \theta(x,\omega) d\mathcal{P}(\omega) \nonumber\\
        & = \frac{1}{\alpha}\int_{0}^{\alpha}VaR_{\gamma}(\theta(x,\omega))d\gamma. \label{que:var2cvar}
\end{align}

$CVaR_\alpha(\theta(x,\omega))$ is the conditional expectation of all outcomes with $\theta(x,\omega)\ge VaR_\alpha (\theta(x,\omega))$. For better understanding the concept of VaR and CVaR, figure \ref{fig:cvar} gives a sample of a loss function $\theta(x,\omega) = \omega$ with one-dimensional normally distributed random value $\omega \sim N(0,1)$. This figure shows that when the confidence level $(1-\alpha)$ is set at $95\%$, the value of VaR equals to the horizontal coordinate of the red vertical line, and the value of CVaR with $95\%$ confidence level equals the red area to the right of the line.

\begin{figure}[h!]
    \centering
    \begin{minipage}{1\textwidth}
    \includegraphics[width=1\textwidth, height=0.5\textwidth]{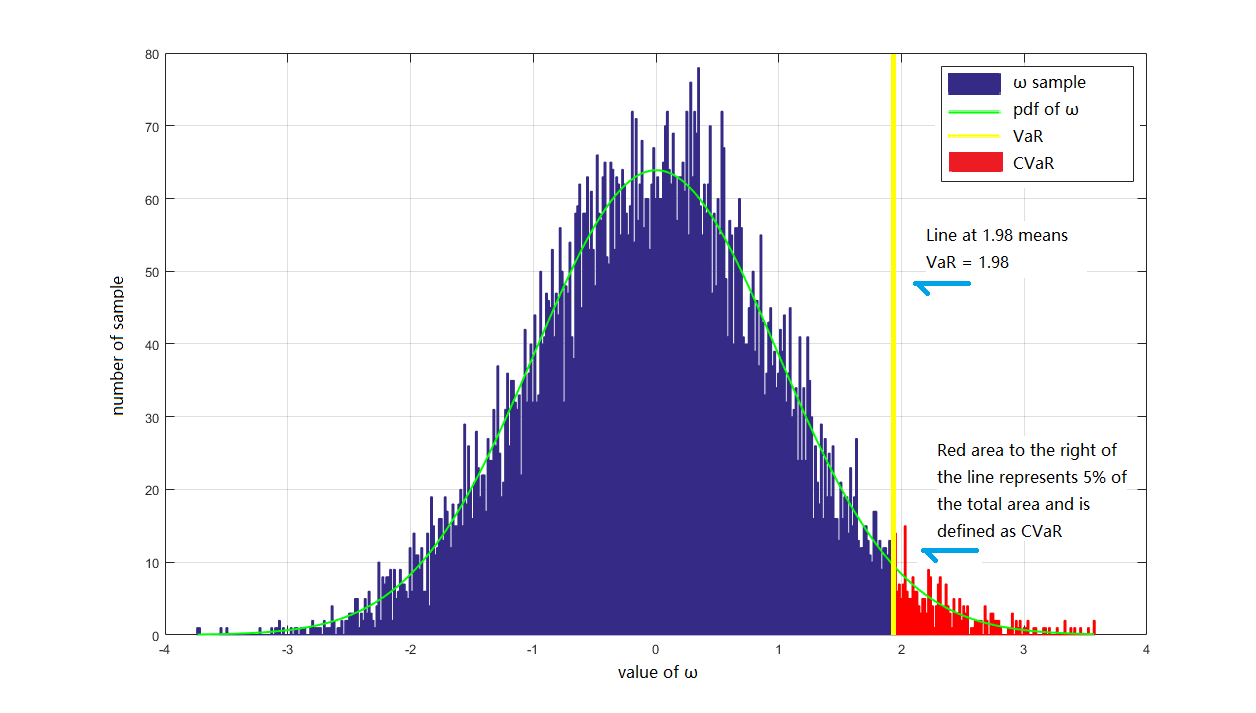}\\
    {\footnotesize For a stochastic event $x$ with $\E(x) = 0$, the distribution of this event shows that only $5\%$ of the outcomes are above 1.98. If the confidence level is set at $95\%$, then the value of VaR equals 1.98(horizontal axis marked by yellow line), and the value of CVaR equals the integral of the area marked in red color. \par}
    \end{minipage}
    \caption{VaR and CVaR for $\theta(x,\omega) = \omega$, where $\omega \sim N(0,1)$}
    \label{fig:cvar}
\end{figure}

\begin{proposition}
    A risk measure $S\lf(\theta(x,\omega)\rg)$ has the following properties:
    \begin{enumerate}
        \item Positive homogeneity: $S\lf(\lambda\theta(x,\omega)\rg) = \lambda S\lf(\theta(x,\omega)\rg)$ for all $\lambda >0$ and $\omega\in \Omega$,
        \item Monotonicity: if $\theta(x_1,\omega) \ge \theta(x_2,\omega)$ for all $\omega \in \Omega$, we have $S\lf(\theta(x_1,\omega)\rg) \ge S\lf(\theta(x_2,\omega)\rg)$, and
        \item Sub-additivity: $S\lf(\theta(x_1,\omega)+ \theta(x_2,\omega)\rg) \le S\lf(\theta(x_1,\omega)\rg) + S\lf(\theta(x_2,\omega)\rg)$  for all $\omega \in \Omega$.
    \end{enumerate}
\end{proposition}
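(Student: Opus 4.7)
The plan is to verify all three properties for the specific risk measure under discussion in this section, $S=CVaR_\alpha$. Two complementary representations of CVaR are useful: the quantile-integral form (\ref{que:var2cvar}) and the Rockafellar--Uryasev variational formula
\[
  CVaR_\alpha(Z) \;=\; \min_{t\in\R}\left\{\, t + \frac{1}{\alpha}\,\E\bigl[(Z-t)_+\bigr]\,\right\},
\]
which I would invoke as a standard fact and which is consistent with (\ref{que:var2cvarexp}) once one identifies the minimiser with $VaR_\alpha(Z)$ via the first-order optimality condition $\mathcal{P}[Z<t]\le 1-\alpha\le\mathcal{P}[Z\le t]$.

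For positive homogeneity I would start from the defining inequality in (\ref{VaR}). Multiplying the loss by $\lambda>0$ rescales the threshold in the probability event linearly, yielding $VaR_\gamma(\lambda\theta(x,\omega))=\lambda\,VaR_\gamma(\theta(x,\omega))$ for every $\gamma\in(0,1)$. Substituting this into the quantile integral (\ref{que:var2cvar}) and pulling the positive constant out of the integral gives the first property. For monotonicity, pointwise domination $\theta(x_1,\omega)\ge\theta(x_2,\omega)$ forces the inclusion $\{\omega:\theta(x_2,\omega)\ge\Theta\}\subseteq\{\omega:\theta(x_1,\omega)\ge\Theta\}$ for every $\Theta$, so the minimal $\Theta$ satisfying the defining inequality in (\ref{VaR}) is at least as large for $\theta(x_1,\omega)$ as for $\theta(x_2,\omega)$. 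Hence $VaR_\gamma$ is monotone in the loss, and integrating over $\gamma\in(0,\alpha)$ preserves the inequality through (\ref{que:var2cvar}).

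The main obstacle is sub-additivity, and it is where the variational representation pays off. Let $t_1^\ast$ and $t_2^\ast$ attain the minima defining $CVaR_\alpha(\theta(x_1,\omega))$ and $CVaR_\alpha(\theta(x_2,\omega))$. Using $t_1^\ast+t_2^\ast$ as a feasible (not necessarily optimal) test point in the analogous minimisation for $\theta(x_1,\omega)+\theta(x_2,\omega)$, and invoking the pointwise inequality
\[
  \bigl((a+b)-(s+t)\bigr)_+ \;\le\; (a-s)_+ + (b-t)_+,
\]
which is immediate from the convexity and monotonicity of $(\cdot)_+$, one bounds the test value by $CVaR_\alpha(\theta(x_1,\omega))+CVaR_\alpha(\theta(x_2,\omega))$ after applying linearity of the expectation. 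Since any test value is an upper bound for the infimum, the claim follows. The only subtle step is justifying the Rockafellar--Uryasev identity itself, which requires the integrability of $\theta(x,\omega)$; this is underwritten by the moment hypotheses imposed on $T(\omega)$ and $r(\omega)$ earlier in this section.
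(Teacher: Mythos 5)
The paper supplies no proof of this proposition, and none is possible as stated: taken literally it asserts that an arbitrary risk measure $S$ is positively homogeneous, monotone and sub-additive, yet the paper's very next results exhibit $VaR_\alpha$ as a risk measure that satisfies the first two properties but \emph{fails} sub-additivity, with an explicit counterexample. The statement is really an axiomatic definition --- the list of properties a \emph{coherent} risk measure is required to have --- dressed up as a proposition. By instantiating $S=CVaR_\alpha$ you have changed the statement: what you actually prove is the paper's subsequent proposition on the coherence of CVaR, not this one. That is a defensible reading of an ill-posed statement, but you should flag it as such rather than silently specialising.

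For that later proposition the comparison is still worth recording. Your treatment of positive homogeneity and monotonicity (establish them for $VaR_\gamma$ directly from the defining minimum in \eqref{VaR}, then integrate over $\gamma\in(0,\alpha)$ via \eqref{que:var2cvar}) is exactly the paper's route through Proposition \ref{Var_prop}. For sub-additivity you genuinely diverge: you use the Rockafellar--Uryasev variational formula together with the pointwise inequality $\bigl((a+b)-(s+t)\bigr)_+\le (a-s)_++(b-t)_+$ at the feasible point $t_1^\ast+t_2^\ast$, whereas the paper runs an Acerbi-style computation from the representation \eqref{que:var2cvarexp}, bounding $\E\bigl[\theta_i\bigl(\mathbf{1}_{[VaR_\alpha(\theta_i),\infty)}(\theta_i)-\mathbf{1}_{[VaR_\alpha(\theta_3),\infty)}(\theta_3)\bigr)\bigr]$ from below by $VaR_\alpha(\theta_i)$ times a difference of indicator expectations that vanishes. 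Your route is cleaner: it needs only convexity and monotonicity of $(\cdot)_+$ plus integrability (which you correctly tie to the moment assumptions on $T(\omega)$ and $r(\omega)$), and it sidesteps the sign bookkeeping on indicator differences --- bookkeeping the paper in fact gets backwards, since its displayed chain carries the factor $\mathbf{1}(\theta_3)-\mathbf{1}(\theta_i)$ where the inequality $\theta_i(\cdot)\ge VaR_\alpha(\theta_i)(\cdot)$ requires $\mathbf{1}(\theta_i)-\mathbf{1}(\theta_3)$. The one step you should not wave through is the identification of the minimiser of the variational problem with $VaR_\alpha$ as defined in \eqref{VaR}: for distributions with atoms the argmin is an interval and the convention $\mathcal{P}[\theta\ge\Theta]\le\alpha$ must be checked to land in it. Fortunately your sub-additivity argument only uses that $t_1^\ast+t_2^\ast$ is a feasible test point, so this does not threaten the conclusion.
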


\begin{proposition}\label{Var_prop}
    The risk measure VaR is
    \begin{enumerate}
        \item Positive homogeneous, and
        \item Monotonic.
    \end{enumerate}
\end{proposition}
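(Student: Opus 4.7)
The plan is to attack both properties directly from the definition $VaR_\alpha(\theta(x,\omega)) = \min\{\Theta \mid \mathcal{P}[\theta(x,\omega)\ge \Theta] \le \alpha\}$ given in \eqref{VaR}. Both proofs amount to manipulating the defining set of thresholds and then taking the minimum, so the main work is just bookkeeping; no probability‐theoretic machinery beyond monotonicity of $\mathcal{P}$ is required.

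For positive homogeneity, fix $\lambda > 0$ and rewrite the event $\{\lambda\theta(x,\omega) \geq \Theta'\}$ as $\{\theta(x,\omega) \geq \Theta'/\lambda\}$, which is valid precisely because $\lambda > 0$. The substitution $\Theta = \Theta'/\lambda$ then gives a bijection of admissible thresholds, so
\[
VaR_\alpha(\lambda\theta(x,\omega)) = \min\{\lambda\Theta \mid \mathcal{P}[\theta(x,\omega)\ge \Theta] \le \alpha\} = \lambda\,VaR_\alpha(\theta(x,\omega)),
\]
where in the last step I pull $\lambda>0$ out of the $\min$. I would flag explicitly that $\lambda>0$ is essential: for $\lambda<0$ the inequality flips and the same argument no longer applies (indeed VaR is not homogeneous for negative scalars), which is why the statement is only for $\lambda>0$.

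For monotonicity, suppose $\theta(x_1,\omega)\ge \theta(x_2,\omega)$ for all $\omega\in\Omega$. Then for every threshold $\Theta\in\R$,
\[
\{\omega : \theta(x_2,\omega)\ge \Theta\} \subseteq \{\omega : \theta(x_1,\omega)\ge \Theta\},
\]
so by monotonicity of $\mathcal{P}$, $\mathcal{P}[\theta(x_2,\omega)\ge\Theta] \le \mathcal{P}[\theta(x_1,\omega)\ge\Theta]$. Consequently any $\Theta$ with $\mathcal{P}[\theta(x_1,\omega)\ge\Theta]\le\alpha$ also satisfies $\mathcal{P}[\theta(x_2,\omega)\ge\Theta]\le\alpha$, i.e.\ the admissible set for $\theta(x_1,\cdot)$ is contained in that for $\theta(x_2,\cdot)$. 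Taking the minimum over a smaller set can only increase (or preserve) its value, yielding $VaR_\alpha(\theta(x_1,\omega))\ge VaR_\alpha(\theta(x_2,\omega))$.

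The only mildly subtle point — which I would mention but not dwell on — is that the $\min$ in \eqref{VaR} is well defined (attained) because the distribution function is right-continuous, so the set of admissible $\Theta$ is closed from below; this is what allows the bijection argument in (i) and the set-inclusion argument in (ii) to be concluded by comparing minima rather than infima. Apart from this observation, both statements are immediate from the definition, and I would expect the entire proof to occupy only a few lines.
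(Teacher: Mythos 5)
Your proposal is correct and follows essentially the same route as the paper: the homogeneity part is the same threshold substitution $\Theta'=\lambda\Theta$ with $\lambda>0$ pulled out of the minimum, and the monotonicity part is the same comparison of tail probabilities, where you merely make explicit the set-inclusion of admissible thresholds that the paper leaves implicit. Your added remarks on why $\lambda>0$ is essential and why the minimum is attained are sensible refinements but do not change the argument.
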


\begin{proof}

    \begin{enumerate}

    \item By the definition of VaR, we have:
    \begin{align*}
        VaR_\alpha(\lambda\theta(x,\omega)) & = min\{\Theta' | \mathcal{P}[\lambda\theta(x,\omega)\ge \Theta'] \le \alpha\} \\
         & = min\{\lambda\Theta | \mathcal{P}[\lambda\theta(x,\omega)\ge \lambda\Theta] \le \alpha\} \\
         & = \lambda min\{\Theta | \mathcal{P}[\theta(x,\omega)\ge \Theta] \le \alpha\} \\
         & = \lambda VaR_\alpha(\theta(x,\omega))
    \end{align*}

    \item Suppose $\theta(x_1,\omega) \ge \theta(x_2,\omega)$ for all $\omega \in \Omega$. We have
    \[
        \mathcal{P}[\theta(x_1,\omega)\ge \Theta] \ge \mathcal{P}[\theta(x_2,\omega)\ge \Theta],
    \]
    therefore
    \[
        VaR_\alpha(\theta(x_1,\omega)) \ge VaR_\alpha(\theta(x_2, \omega)).
    \]

    \end{enumerate}
\end{proof}
We mark that VaR is not sub-additive (counter example is shown in \cite{danielsson2005subadditivity}). 
\begin{example}
    Consider the function $\theta(x,\omega) = x + \omega$, where
    \[
        \omega = \epsilon + \eta, \;\; \epsilon \sim N(0,1),\;\; \eta = \lf\{
        \begin{array}{ll}
    		0 & with\; probability\; 0.991\\
    		10 & with\;probability\; 0.009
    	\end{array}
        \rg.
    \]
    In the case when $\alpha = 0.01$, we obtain
    \[
        VaR_{\alpha}\lf(\theta(x,\omega) +\theta(y,\omega)\rg) = x+y +9.8 > VaR_{\alpha}\lf(\theta(x,\omega)\rg) + VaR_{\alpha}\lf(\theta(y,\omega)\rg) = x+3.1 + y + 3.1.
    \]
\end{example}

\begin{proposition}
    The risk measure CVaR is
    \begin{enumerate}
        \item Positive homogeneous,
        \item Monotonic, and
        \item sub-additive\cite{acerbi2002coherence}.
    \end{enumerate}
\end{proposition}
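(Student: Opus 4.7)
The plan is to establish the three properties separately, leveraging Proposition \ref{Var_prop} and the integral representation (\ref{que:var2cvar}) for the first two, and switching to a variational reformulation for sub-additivity.

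For positive homogeneity, I would start from (\ref{que:var2cvar}) and pull $\lambda>0$ outside the integrand using the positive homogeneity of VaR already obtained in Proposition \ref{Var_prop}, obtaining
\[
    CVaR_\alpha(\lambda\theta(x,\omega)) = \f{1}{\alpha}\int_0^\alpha VaR_\gamma(\lambda\theta(x,\omega))\,d\gamma = \lambda\,\f{1}{\alpha}\int_0^\alpha VaR_\gamma(\theta(x,\omega))\,d\gamma.
\]
For monotonicity, if $\theta(x_1,\omega)\ge\theta(x_2,\omega)$ for all $\omega\in\Omega$, Proposition \ref{Var_prop} yields the pointwise inequality $VaR_\gamma(\theta(x_1,\omega))\ge VaR_\gamma(\theta(x_2,\omega))$ for every $\gamma\in(0,\alpha]$, and integration preserves the ordering. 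Neither step contains any real subtlety beyond reusing what has been proved for VaR.

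Sub-additivity is the main obstacle and cannot be obtained by the same trick, because the preceding example already shows that VaR is \emph{not} sub-additive. The route I would take is via the Rockafellar--Uryasev representation
\[
    CVaR_\alpha(\theta(x,\omega)) = \min_{t\in\R}\lf\{ t + \f{1}{\alpha}\,\E\lf[(\theta(x,\omega)-t)_+\rg]\rg\},
\]
whose equivalence with the definition in (\ref{que:var2cvarexp})--(\ref{que:var2cvar}) is a short but delicate lemma in its own right (care is needed when the distribution of $\theta(x,\omega)$ has an atom at $VaR_\alpha$). Assuming that equivalence, let $t_1^\ast$ and $t_2^\ast$ attain the minima for $\theta_1:=\theta(x_1,\omega)$ and $\theta_2:=\theta(x_2,\omega)$ respectively. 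Evaluating the right-hand side of the representation for $\theta_1+\theta_2$ at the suboptimal point $t=t_1^\ast+t_2^\ast$ and using the elementary pointwise bound $(a+b)_+\le a_++b_+$ would give
\[
    CVaR_\alpha(\theta_1+\theta_2)\le t_1^\ast+t_2^\ast+\f{1}{\alpha}\,\E\lf[(\theta_1-t_1^\ast)_++(\theta_2-t_2^\ast)_+\rg]=CVaR_\alpha(\theta_1)+CVaR_\alpha(\theta_2).
\]

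The genuine difficulty, therefore, is not this one-line convexity argument at the end but the justification of the Rockafellar--Uryasev reformulation under the mild integrability assumed in the paper and in the possible presence of atoms of the loss distribution at the VaR level. Once that equivalence is in place the sub-additivity conclusion is immediate, and if one prefers to bypass it entirely the property can simply be quoted from Acerbi \cite{acerbi2002coherence} as already indicated in the statement.
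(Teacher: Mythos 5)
Your argument for positive homogeneity and monotonicity is exactly the paper's: both are read off from the representation $CVaR_\alpha=\frac{1}{\alpha}\int_0^\alpha VaR_\gamma\,d\gamma$ together with the corresponding properties of VaR. For sub-additivity, however, you take a genuinely different route. The paper works directly from the conditional-expectation form \eqref{que:var2cvarexp}: setting $\theta_3=\theta_1+\theta_2$, it writes $\alpha\bigl(CVaR_\alpha(\theta_1)+CVaR_\alpha(\theta_2)-CVaR_\alpha(\theta_3)\bigr)$ as $\E\bigl[\theta_1(\mathbf{1}_{\{\theta_3\ge VaR_\alpha(\theta_3)\}}-\mathbf{1}_{\{\theta_1\ge VaR_\alpha(\theta_1)\}})+\theta_2(\cdots)\bigr]$, bounds each bracket below by replacing $\theta_i$ with the constant $VaR_\alpha(\theta_i)$ (Acerbi's classical argument), and concludes because each difference of indicators has zero expectation. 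You instead invoke the Rockafellar--Uryasev variational formula, evaluate it at the suboptimal point $t_1^\ast+t_2^\ast$, and use $(a+b)_+\le a_++b_+$. Both are standard and correct; yours is arguably shorter once the variational representation is granted, and you are right to flag that representation as the real content --- note that the paper itself only quotes it later (in Lemma \ref{prop:cvareqv}, citing \cite{rockafellar2002conditional}) rather than proving it, so using it here would make the proposition logically dependent on a fact the paper never establishes. To be fair, the paper's own route carries an exactly parallel hidden assumption: the final step needs $\E[\mathbf{1}_{\{\theta_i\ge VaR_\alpha(\theta_i)\}}]=\alpha$ exactly, i.e.\ no atom of the loss distribution at the VaR level --- the same atom issue you identify for the variational formula. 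So neither approach is fully watertight in the atomic case, and each buys a different convenience: the paper's is self-contained given \eqref{que:var2cvarexp}; yours connects directly to the reformulation the paper uses anyway in Lemma \ref{prop:cvareqv}.
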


\begin{proof}
    The positive homogeneity and monotonicity of CVaR can be easily proved by combining the Proposition \ref{Var_prop} and the definition \eqref{que:var2cvar}. To prove the Sub-additivity define $\theta_3 = \theta_1 + \theta_2$. By definition \eqref{que:var2cvarexp}, we have
    \footnotesize
    \begin{align*}
        \alpha  \cdot(CVaR_\alpha(\theta_1) & + CVaR_\alpha(\theta_1) - CVaR_\alpha(\theta_3))  = \\
        & = \alpha\cdot \E\lf[
              \frac{1}{\alpha}\theta_1\bold{1}_{[VaR_{\alpha}(\theta_1),+\infty)} (\theta_1)
            + \frac{1}{\alpha}\theta_2\bold{1}_{[VaR_{\alpha}(\theta_2),+\infty)} (\theta_2)
            - \frac{1}{\alpha}\theta_3\bold{1}_{[VaR_{\alpha}(\theta_3),+\infty)} (\theta_3)
            \rg] \\
        & = \E\lf[
              \theta_1\lf(\bold{1}_{[VaR_{\alpha}(\theta_3),+\infty)} (\theta_3) - \bold{1}_{[VaR_{\alpha}(\theta_1),+\infty)} (\theta_1)\rg)\rg. \\
            & \qquad\qquad\qquad\qquad\qquad\qquad \qquad\qquad +
              \lf.\theta_2\lf(\bold{1}_{[VaR_{\alpha}(\theta_3),+\infty)} (\theta_3) - \bold{1}_{[VaR_{\alpha}(\theta_2),+\infty)} (\theta_2)\rg)
            \rg] \\
        & \ge VaR_{\alpha}(\theta_1)\E\lf[\bold{1}_{[VaR_{\alpha}(\theta_3),+\infty)} (\theta_3) - \bold{1}_{[VaR_{\alpha}(\theta_1),+\infty)} (\theta_1)\rg]\\
            & \qquad\qquad\qquad\qquad\qquad\qquad \qquad\qquad +
            VaR_{\alpha}(\theta_2)\E\lf[\bold{1}_{[VaR_{\alpha}(\theta_3),+\infty)} (\theta_3) - \bold{1}_{[VaR_{\alpha}(\theta_2),+\infty)} (\theta_2)\rg] \\
        & = VaR_{\alpha}(\theta_1)\lf(\alpha - \alpha\rg) + VaR_{\alpha}(\theta_2)\lf(\alpha - \alpha\rg) \\
        & = 0 \\
    \end{align*}
\end{proof}

Consider $\StLCP(F, L,\omega)$ defined by the function $F(x,u,\omega) = T(\omega)\bs x\\u\es  + r(\omega)$ and the extended second order cone $L$, problem (\ref{queslcp}) becomes:

\[
    \StLCP(F, L,\omega)\lf\{
    \begin{array}{l}
		Find \; (x,u)\in L,\; such \; that\\
		F(x,u,\omega) \in M \; and \; \langle \bs x\\u\es, F(x,u,\omega) \rangle = 0,\;\omega\in \Omega, \quad a.s.
	\end{array}
    \rg.
\]
where $T(\omega)=\bs A(\omega) & B(\omega)\\C(\omega) & D(\omega) \es$, with $A(\omega)\in\R^{k\times k}$,
	$B(\omega)\in\R^{k\times\ell}$, $C(\omega)\in\R^{\ell\times k}$ and $D(\omega)\in\R^{\ell\times\ell}$; $r(\omega) = \bs p(\omega)\\ q(\omega)\es$, with $p(\omega)\in \R^{k}$, $q(\omega)\in \R^{\ell}$, for $\omega\in\Omega$.

We use Corollary \ref{cor:Main_thm} to reformulate the SLCP to the stochastic mixed complementarity problem (SMixCP). Given the functions $\widetilde{F}_1$, $\widetilde{F}_2$, and a cone $K$, the SMixCP is defined as:
	\[
		\StMCP(\widetilde{F}_1,\widetilde{F}_2, K ,\omega):\left\{
		\begin{array}{l}
			Find \; \bs x\\u\\t\es, \; such \; that\\
			\widetilde{F}_2(x,u,t,\omega)=0, \; and\; (x,\widetilde{F}_1(x,u,t,\omega))\in\C(K),\;\omega\in \Omega, \quad a.s.
		\end{array}
		\right.
	\]

\begin{corollary}\label{cor:Main_thm}

     Suppose $u\ne 0$, $Cx+Du+q\ne 0$. We have
            \[
                z\in \SStLCP(F, L,\omega)\iff \exists t>0,
            \]
            such that
            \[
                \tilde{z}\in \SStMCP(\widetilde{F}_1,\widetilde{F}_2,\R^k_+,\omega),
            \]
            where
            \[
                \widetilde{F}_1(x,u,t,\omega)=A(\omega)(x+te)+B(\omega)u+p(\omega)
            \]
            and
            \small
            \begin{equation}\label{mathcal_SF}
                \widetilde{F}_2(x,u,t,\omega) =
                \begin{pmatrix}
                    \lf[tC(\omega)+ue\tp A(\omega)\rg](x+te)+ue\tp\lf[B(\omega)u+p(\omega)\rg]+t\lf[D(\omega)u+q(\omega)\rg] \\
                    t^2 - \|u\|^2
                \end{pmatrix}.
            \end{equation}
            \normalsize
\end{corollary}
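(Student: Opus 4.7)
My plan is to apply Proposition \ref{prop:cs-esoc}(iv) to the ESOCLCP membership condition under the standing hypotheses $u \ne 0$ and $v := C(\omega)\bar x + D(\omega)u + q(\omega) \ne 0$, and then rewrite the resulting auxiliary scalar conditions as the zero set of $\widetilde{F}_2$ after a change of variables. I will use $\bar x$ for the cone variable of $\StLCP(F,L,\omega)$ to avoid a clash with the $x$ appearing in the statement of the corollary.

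First, I would set $y := A(\omega)\bar x + B(\omega)u + p(\omega)$ and apply Proposition \ref{prop:cs-esoc}(iv) to replace the ESOCLCP condition by the existence of $\lambda > 0$ such that (a) $v = -\lambda u$, (b) $e\tp y = \|v\|$, and (c) $(\bar x - \|u\|e,\, y) \in \C(\R^k_+)$. I would then reparametrise by setting $t := \|u\|$ and writing $\bar x = x + te$, so that the shifted slack $\bar x - te$ becomes the new variable $x$ of the $\StMCP$. Under this substitution, $y$ becomes exactly $\widetilde{F}_1(x,u,t,\omega)$, and condition (c) becomes the complementarity requirement $(x,\widetilde{F}_1(x,u,t,\omega)) \in \C(\R^k_+)$. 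The scalar equation $t^2 - \|u\|^2 = 0$, together with $t > 0$, encodes $t = \|u\|$ and provides the second component of $\widetilde{F}_2$.

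The main algebraic step is to show that conditions (a) and (b), coupled with the existence of $\lambda > 0$, are equivalent to the vanishing of the first component of $\widetilde{F}_2$. For the forward direction, combining (a) and (b) yields $\lambda t = \|v\| = e\tp y$, hence $\lambda = (e\tp y)/t$. Substituting this expression for $\lambda$ into $v + \lambda u = 0$ and multiplying through by $t$ produces, after routine grouping of the terms involving $(x+te)$, precisely the first block of \eqref{mathcal_SF}.

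The step I expect to be the most delicate is the converse: recovering $\lambda > 0$ from the equation $\widetilde{F}_2 = 0$ alone. Given $\widetilde{F}_2(x,u,t,\omega) = 0$ together with $(x,\widetilde{F}_1) \in \C(\R^k_+)$, I would define $\lambda := e\tp \widetilde{F}_1 / t$, which is nonnegative because $\widetilde{F}_1 \ge 0$ and $t > 0$; the vanishing of the first block of $\widetilde{F}_2$ then forces $v = -\lambda u$, and the standing hypothesis $v \ne 0$ upgrades $\lambda \ge 0$ to $\lambda > 0$. Taking norms of $v = -\lambda u$ recovers condition (b). Thus the three separate pieces of information, $t > 0$, complementarity of $(x,\widetilde{F}_1)$, and $v \ne 0$, are all needed to reconstruct the multiplier supplied by Proposition \ref{prop:cs-esoc}(iv); dropping any one of them would break the reverse implication, which is why the hypothesis $Cx+Du+q \ne 0$ is essential.
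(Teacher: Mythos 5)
Your argument follows the same route as the paper for the forward implication: apply Proposition \ref{prop:cs-esoc}(iv), set $t=\|u\|$, shift the cone variable by $te$, and eliminate the multiplier $\lambda$ via $\lambda t = e\tp y$ to obtain the first block of $\widetilde{F}_2$. That part matches the paper's computation essentially line for line. The substantive difference is that the paper's proof stops there: it only establishes $z\in\SStLCP(F,L,\omega)\Rightarrow\tilde z\in\SStMCP(\widetilde{F}_1,\widetilde{F}_2,\R^k_+,\omega)$ and never addresses the converse, even though the statement is an equivalence. Your converse argument is correct and fills that gap: from $t^2=\|u\|^2$ and $t>0$ you recover $t=\|u\|$; the vanishing of the first block rearranges to $tv+u\,e\tp\widetilde{F}_1=0$, so $v=-\lambda u$ with $\lambda:=e\tp\widetilde{F}_1/t\ge 0$ (nonnegativity of $\widetilde{F}_1$ coming from the complementarity condition); the hypothesis $Cx+Du+q\ne 0$ then forces $\lambda>0$, and taking norms gives $e\tp\widetilde{F}_1=\lambda t=\|v\|$, so Proposition \ref{prop:cs-esoc}(iv) applies in the reverse direction. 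Your observation that all three ingredients ($t>0$, $\widetilde{F}_1\ge 0$ from complementarity, and $v\ne 0$) are needed to reconstruct $\lambda$ correctly identifies why the standing hypotheses appear in the corollary, a point the paper leaves implicit. In short: same decomposition, but your write-up is the complete proof the paper should have given.
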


\begin{proof}
    Suppose that $z\in\SStLCP(F,L,\omega)$. Then, $(x,u,y,v)\in\C(L)$, where $y=A(\omega)x+B(\omega)u+p(\omega)$ and $v=C(\omega)x+D(\omega)u+q(\omega)$. Let $t = \|u\|$, Then, by item (iv)
	of Proposition \ref{prop:cs-esoc} we have that $\exists\lambda>0$ such that
	\begin{equation}\label{parall-eq3}
		C(\omega)x+D(\omega)u+q(\omega)=v=-\lambda u,
	\end{equation}
	\begin{equation}\label{prod-eq3}
		e\tp (A(\omega)x+B(\omega)u+p(\omega))=e\tp y=\|v\|=\|C(\omega)x+D(\omega)u+q(\omega)\|=\lambda t,
	\end{equation}
	\begin{equation}\label{cpset-eq3}
		(\tilde{x},\widetilde{F}_1(\tilde{x},u,t,\omega))=(x-te,A(\omega)x+B(\omega)u+p)=(x-te,y)\in\C(\R^k_+)
	\end{equation}
	where $\widetilde{z} = \bs \tilde{x}\\u\\t\es := \bs x-t\\u\\t\es\in\R^{k+\ell+1}$. From equation \eqref{parall-eq3} we obtain $t(C(\omega)x+D(\omega)u+q(\omega))=-t \lambda u$, which by equation \eqref{prod-eq3} implies
	$t(C(\omega)x+D(\omega)u+q(\omega))=-ue\tp (A(\omega)x+B(\omega)u+p(\omega))$, which after some algebra gives
	\begin{equation}\label{zero-eq3}
		\widetilde{F}_2(\tilde{x},u,t)= 0.
	\end{equation}
	From equations \eqref{cpset-eq3} and \eqref{zero-eq3} we obtain that $z\in\SStMCP(\widetilde{F}_1,\widetilde{F}_2,\R^k_+,\omega)$.
	\medskip
\end{proof}
Corollary \ref{cor:Main_thm} provides an alternative way to find the solutions to the $\StLCP(F,L,\omega)$, by converting it to the $\StMCP(\widetilde{F}_1,\widetilde{F}_2, \R^k_+,\omega)$. Such conversion enables us to study $\StLCP(F,L,\omega)$ through a C-function.
The scalar form of \emph{Fischer-Burmeister (FB) C-function} \cite{fischer1992special, fischer1995newton} is defined as:

\[
    \psi_{FB}(a,b) = \sqrt{a^2+b^2} - (a+b) \quad \forall (a,b) \in \mathbb{R}^2.
\]

The FB-based equation formulation of $\StMCP(\widetilde{F}_1,\widetilde{F}_2,\R^k_+,\omega)$ is:

\begin{equation}\label{que:FB-SMixCP}
    \mathbb{F}^{\StMCP}_{FB}(x,u,t,\omega) =
    \begin{pmatrix}
         \psi\lf(x_1,(\widetilde{F}_1)_1(x,u,t,\omega)\rg) \\
         \vdots \\
         \psi\lf(x_k,(\widetilde{F}_1)_k(x,u,t,\omega)\rg) \\
         \widetilde{F}_2(x,u,t,\omega)
    \end{pmatrix}.
\end{equation}
where $\psi(\cdot ):\R^2\rightarrow \R$ is the scalar FB C-function stated in Chapter 2. It should be mentioned that the FB C-function is convex, but non-smooth on $\psi(0,0)$. According to the definition of FB C-function, a point $(x^*,u^*,t^*)$ is a solution to the stochastic mixed complementarity problem $\StMCP(\widetilde{F}_1,\widetilde{F}_2,\R^k_+,\omega)$, if and only if
\begin{equation}\label{que:stfb}
    \mathbb{F}^{\StMCP}_{FB}(x,u,t,\omega) = 0.
\end{equation}

The associated merit function of $\StMCP(\widetilde{F}_1,\widetilde{F}_2,\R^k_+,\omega)$ is:

\begin{equation}\label{que:MF-SMixCP}
    \theta^{\StMCP}_{FB}(x,u,t,\omega)=\frac{1}{2}\mathbb{F}^{\StMCP}_{FB}(x,u,t,\omega)\tp \mathbb{F}^{\StMCP}_{FB}(x,u,t,\omega).
\end{equation}
Based on (\ref{que:FB-SMixCP}) and (\ref{que:MF-SMixCP}), the merit function can be written as:
\[
    \theta_{FB}^{\StMCP}(x,u,t,\omega) = \frac{1}{2}\sum\limits_{i=1}^{k}\psi^2\lf(x_i,\widetilde{F}_1^i(x,u,t,\omega)\rg) + \frac{1}{2}\sum\limits_{j=1}^{\ell}\widetilde{F}_2^j(x,u,t,\omega).
\]
By the definition of merit function, a point $(x^*,u^*,t^*)$ is a solution to the stochastic mixed complementarity problem $\StMCP(\widetilde{F}_1,\widetilde{F}_2,\R^k_+,\omega)$, if

\[
	\theta_{FB}^{\StMCP}(x^*,u^*,t^*,\omega) = 0 \quad \omega\in \Omega, \quad a.s.
\]

\begin{proposition}
    The associated merit function $\theta_{FB}^{\StMCP}(x^*,u^*,t^*,\omega)$ is continuously differentiable on $\R^{k+\ell+1}$, if $\widetilde{F}_1(x^*,u^*,t^*,\omega)$ and $\widetilde{F}_2(x^*,u^*,t^*,\omega)$ are continuously differentiable on $\R^k$ and $\R^{\ell+1}$, respectively.
\end{proposition}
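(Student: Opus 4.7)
The plan is to reduce the claim to the single scalar fact that, although the Fischer--Burmeister function $\psi_{FB}(a,b)=\sqrt{a^2+b^2}-(a+b)$ fails to be differentiable at the origin, its square $\psi_{FB}^2$ is continuously differentiable on the whole of $\R^2$. Once that is established, continuous differentiability of $\theta_{FB}^{\StMCP}(\cdot,\cdot,\cdot,\omega)$ follows at once from the chain rule together with the $C^1$ hypotheses on $\widetilde F_1$ and $\widetilde F_2$.

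First I would handle the scalar lemma. For $(a,b)\ne(0,0)$ the gradient
\[
\nabla\psi_{FB}(a,b)=\left(\tfrac{a}{\sqrt{a^2+b^2}}-1,\;\tfrac{b}{\sqrt{a^2+b^2}}-1\right)
\]
is bounded on $\R^2\setminus\{0\}$, and by the product rule
\[
\nabla(\psi_{FB}^2)(a,b)=2\psi_{FB}(a,b)\,\nabla\psi_{FB}(a,b).
\]
Since $|\psi_{FB}(a,b)|\le\sqrt{a^2+b^2}+|a|+|b|\to 0$ as $(a,b)\to 0$ while $\nabla\psi_{FB}$ stays bounded, this expression tends to $(0,0)$. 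A direct computation of the partial derivatives of $\psi_{FB}^2$ at $(0,0)$ from difference quotients shows that they both vanish, so $\nabla(\psi_{FB}^2)$ extends continuously to all of $\R^2$ and $\psi_{FB}^2\in C^1(\R^2)$.

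Next I would assemble the merit function from smooth pieces. For each $i\in\{1,\dots,k\}$ the map $(x,u,t)\mapsto(x_i,\widetilde F_1^i(x,u,t,\omega))$ is $C^1$ on $\R^{k+\ell+1}$ by hypothesis on $\widetilde F_1$, so the composition with the scalar $C^1$ function $\psi_{FB}^2$ is again $C^1$ by the chain rule; summing over $i$ preserves $C^1$ regularity. For the block coming from $\widetilde F_2$, each component is $C^1$ by hypothesis and $y\mapsto\tfrac12 y^2$ is smooth, so reading the merit function in accordance with its defining identity $\theta^{\StMCP}_{FB}=\tfrac12\|\mathbb{F}^{\StMCP}_{FB}\|^2$ gives a sum of $C^1$ terms. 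Adding the two blocks completes the proof.

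The only genuinely delicate point is the behaviour of $\psi_{FB}^2$ at the origin of the coordinate plane, where $\psi_{FB}$ itself is non-smooth; the rest is routine use of the chain rule and the algebraic closure of $C^1$ under sums, products and compositions. I would carry out the argument for each fixed $\omega\in\Omega$, in line with how the proposition is phrased.
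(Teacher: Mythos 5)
Your proposal is correct and follows essentially the same route as the paper: reduce everything to the fact that $\psi_{FB}^2$ is $C^1$ on all of $\R^2$ despite the non-smoothness of $\psi_{FB}$ at the origin, then invoke the chain rule and closure of $C^1$ under sums. In fact your treatment of the origin is slightly more complete than the paper's, since you verify that $\nabla(\psi_{FB}^2)=2\psi_{FB}\nabla\psi_{FB}\to(0,0)$ (bounded gradient times a vanishing factor) and hence that the partial derivatives are continuous there, a step the paper asserts without justification; you also correctly read the $\widetilde F_2$ block as squared components, consistent with $\theta^{\StMCP}_{FB}=\tfrac12\|\mathbb{F}^{\StMCP}_{FB}\|^2$.
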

\begin{proof}
    First we prove that $\psi^2$ is continuously differentiable. We note that $\psi$ is continuously differentiable at every $(a, b)\neq (0,0)$. It is easy to verify that $\psi^2$ is continuously differentiable at every $(a, b)\neq (0,0)$. Consider the following to limits at point $(a, b) = (0,0)$:
    \[
        \lim\limits_{\Delta x\rightarrow 0}\frac{\psi^2(\Delta x, 0) - \psi^2(0, 0)}{\Delta x} = \frac{2\lf(\Delta x^2\rg) - 2\sqrt{\Delta x^2} \cdot \Delta x}{\Delta x} = 2(\Delta x - |\Delta x|) = 0,
    \]
    and
    \[
        \lim\limits_{\Delta y\rightarrow 0}\frac{\psi^2(0, \Delta y) - \psi^2(0, 0)}{\Delta y} = \frac{2\lf(\Delta y^2\rg) - 2\sqrt{\Delta y^2} \cdot \Delta y}{\Delta y} = 2(\Delta y - |\Delta y|) = 0.
    \]
where $\Delta x$, $\Delta y > 0$. Both partial derivatives of $\psi^2$ at $(0,0)$ are continuous, $\psi^2$ is continuously differentiable. Hence, $\theta_{FB}^{\StMCP}(x^*,u^*,t^*,\omega)$ is continuously differentiable on $\R^{k+\ell+1}$ if and only if $\widetilde{F}_1(x^*,u^*,t^*,\omega)$ and $\widetilde{F}_2(x^*,u^*,t^*,\omega)$ are continuously differentiable on $\R^k$ and $\R^{\ell+1}$, respectively.
\end{proof}

Since the function $\psi^2(a,b)$ is not convex on $\R^2$, it is easy to verify that the merit function $ \theta_{FB}^{\StMCP}(x,u,t,\omega)$ is not convex on the feasible region. We focus on finding the stationary point of the merit function \eqref{que:FB-SMixCP}. A point $(x^*,u^*,t^*)$ is a stationary point to the stochastic mixed complementarity problem $\StMCP(\widetilde{F}_1,\widetilde{F}_2,\R^k_+,\omega)$, if:

\begin{equation}\label{que:SMIXCP}
    \nabla \theta_{FB}^{\StMCP}(x^*,u^*,t^*,\omega) =  0 \quad \omega\in \Omega, \quad a.s.
\end{equation}

That is
\begin{equation}\label{que:SMIXCP2}
    \mathcal{A}(\omega)^\top\mathbb{F}_{FB}^{\StMCP}(x^*,u^*,t^*,\omega) =  0 \quad \omega\in \Omega, \quad a.s.,
\end{equation}
where
\[
    \mathcal{A}=
    \begin{pmatrix}
        D_a+D_bJ_x\widetilde{F}_1(x^*,u^*,t^*,\omega) & D_bJ_{(u,t)}\widetilde{F}_1(x^*,u^*,t^*,\omega)\\
        J_x\widetilde{F}_2(x^*,u^*,t^*,\omega) & J_{(u,t)}\widetilde{F}_2(x^*,u^*,t^*,\omega)
    \end{pmatrix}
\]
is a nonsingular matrix. Combining equation \eqref{que:SMIXCP2} with equation \eqref{que:stfb} implies that equation \eqref{que:SMIXCP} is a necessary condition for $(x^*,u^*,t^*)$ to be a solution to $\StMCP(\widetilde{F}_1,\widetilde{F}_2,\R^k_+,\omega)$.

The feasible set of $\StMCP(\widetilde{F}_1,\widetilde{F}_2,\R^k_+,\omega)$ shrinks as $|\Omega|$ (i.e., the size of the possible outcome set $\Omega$) increases. When $|\Omega| = \infty$, we cannot generally find a solution to $\StMCP(\widetilde{F}_1,\widetilde{F}_2,\R^k_+,\omega)$ such that system (\ref{que:SMIXCP}) holds almost surely for any $\omega\in \Omega$, because there will be a large number of equations in system \eqref{que:SMIXCP}. Figure \ref{fig:Omega} shows the situation when the size of $\Omega$. 

As it is introduced above, probability models provide appropriate deterministic reformulations of the stochastic complementarity problems. It can be associated with the stochastic complementarity problems to find persuasive solutions. These persuasive solutions to stochastic complementarity problems would make a proper trade-off between the satisfaction of infinite complementarity constraints and solvability of the problems.

\begin{figure}[h!]
  \centering
  \begin{minipage}{1\textwidth}
  \includegraphics[width=1\textwidth, height=1\textwidth]{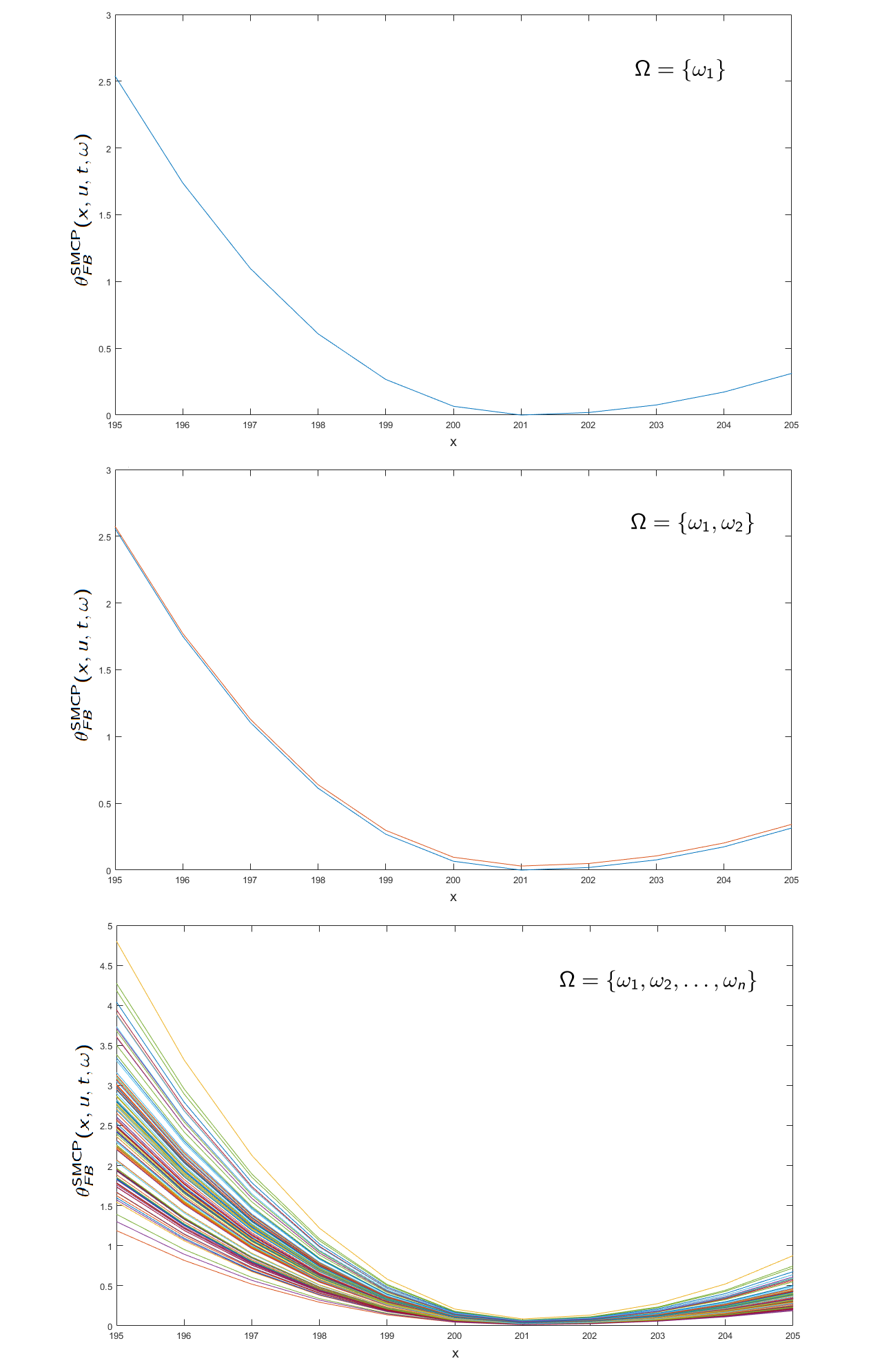}\\
  {\footnotesize For a possible outcome set $\Omega$, when the size of $\Omega$ equals 1, i.e. $|\Omega|=1$ (figure 1), we can easily find a solution to the problem by using the merit function. When $|\Omega|$ increases to 2 (figure 2), the solution for the first case is not longer suitable for both outcomes. As the size of $|\Omega|$ increases (figure 3), it become almost impossible to find a solution to the problem which is suitable for all outcomes. \par}
    \end{minipage}
  \caption{The minimum point of merit function varies $\theta^{\StMCP}_{FB}$ as $|\Omega|$ increases}
  \label{fig:Omega}
\end{figure}

Since $\theta^{\StMCP}_{FB}(x, u, t,\omega)\ge 0$, given a confidence level $(1-\alpha) \in (0,1)$, a point $(x^*, u^*, t^*)$ is a plausible solution to $\StMCP(\widetilde{F}_1,\widetilde{F}_2,\R^k_+,\omega)$ if it is a solution to

\begin{equation}\label{que:slacksmcp}
    \arg \min_{x,u,t}\{\Theta|\mathcal{P}\{\theta^{\StMCP}_{FB}(x^*, u^*, t^*,\omega) \le \Theta\} \ge 1 - \alpha\},
\end{equation}

This is a relaxation of problem (\ref{que:SMIXCP}). A small value of $\alpha$ means that it prefers the satisfaction of the complementarity constraints rather than the solvability of the problem. Note that the problem (\ref{que:slacksmcp}) can be written as:

\begin{equation}\label{Ind_fun}
    \arg \min_{x,u,t}\{\Theta|\E[\bold{1}_{[0,+\infty)}\lf(\theta^{\StMCP}_{FB}(x^*, u^*, t^*,\omega) - \Theta\rg)]\le \alpha\},
\end{equation}
where the indicator function $\bold{1}_{[0,+\infty)}(\cdot)$ is neither convex nor continuously differentiable at the point $0$. Even though the function $\theta^{\StMCP}_{FB}(\cdot)$ is convex and continuously differentiable, the objective function (\ref{Ind_fun}) is non-smooth. If we use the indicator function in the objective function, difficulties occur when applying algorithms which are only viable for smooth objective functions. Addressing this concern, the CVaR method will be considered, which undertakes convex and continuously differentiable objective functions. It harmonises the incompatibility between the satisfaction of infinite number of complementarity constraints and solvability of the problems, as well as inherits convexity and continuous differentiability from the merit function $ \theta^{\StMCP}_{FB} (x,u,t,\omega)$. In the CVaR method, $ (\theta^{\StMCP}_{FB}(x^*, u^*, t^*,\omega) - \Theta)$ will be used as the ``loss function" to measure the ``loss" of complementarity. It should be noted that, the higher the value of the ``loss function", the more complementarity constraints of this stochastic complementarity problem are lost. We transform (\ref{que:slacksmcp}) into CVaR based objective function and then construct the stochastic programming model  in the following context.

Rewritting (\ref{Ind_fun}) as Value-at-Risk (VaR) to measure of the loss of complementarity:
\[
    VaR_{\alpha}\lf(\theta^{\StMCP}_{FB}(x^*, u^*, t^*,\omega)-\Theta\rg)\le 0
\]

VaR is a measure of complementarity loss defined in (\ref{VaR}). However, the disadvantages of using VaR as the measure of complementarity loss is significant: VaR is not consistent, because it is neither convex nor smooth \cite{artzner1999coherent}. On the other hand, CVaR (defined in (\ref{que:var2cvar})) has superior mathematical properties than VaR, as it inherits continuous differentiability and convexity from the merit function. Moreover, CVaR is a more conservative measure of complementarity loss than VaR.

\begin{theorem}\label{thm:cvar_cont}
    If $\theta^{\StMCP}_{FB}(x,u,t,\omega)$ is continuously differentiable on $I\subseteq\R^{k+\ell+1}$, then for any $0<\alpha<1$, the measure of complementarity loss $CVaR_\alpha(\theta^{\StMCP}_{FB}(x,u,t,\omega))$ is continuously differentiable on $\R_+$.
\end{theorem}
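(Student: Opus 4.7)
The strategy is to use the Rockafellar--Uryasev variational formula for CVaR, which replaces the integration-over-a-moving-region definition in \eqref{que:var2cvar} with an unconstrained minimisation whose integrand has only a single, mild kink. Writing $y=(x,u,t)$ and $Z_y(\omega)=\theta^{\StMCP}_{FB}(x,u,t,\omega)$, this identity reads
\[
CVaR_\alpha(Z_y)\;=\;\min_{\eta\in\R}\; G_\alpha(y,\eta),\qquad G_\alpha(y,\eta):=\eta+\f{1}{\alpha}\E\lf[(Z_y(\omega)-\eta)_+\rg],
\]
with the minimum attained at $\eta^*(y)=VaR_\alpha(Z_y)$. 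I would then deduce continuous differentiability of $y\mapsto CVaR_\alpha(Z_y)$ by first proving continuous differentiability of $G_\alpha$ in $y$ and then applying an envelope-type argument.

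First I would establish that $G_\alpha$ is jointly $C^1$ in $y$. The positive part $(\cdot)_+$ is $1$-Lipschitz with almost-everywhere derivative $\bold{1}_{\{\cdot>0\}}$, and $Z_y(\omega)$ is $C^1$ in $y$ by hypothesis. Using the integrability assumptions $\E[\|T(\omega)\tp T(\omega)\|]<\infty$ and $\E[\|r(\omega)\|]<\infty$ introduced in Section~3 to produce an integrable dominating function for $\nabla_y Z_y(\omega)$, the dominated convergence theorem permits differentiation under the expectation and yields
\[
\nabla_y G_\alpha(y,\eta)\;=\;\f{1}{\alpha}\E\lf[\nabla_y Z_y(\omega)\cdot\bold{1}_{\{Z_y(\omega)>\eta\}}\rg].
\]
Continuity of this expression in $(y,\eta)$ is the delicate point: the integrand is discontinuous on the event $\{Z_y(\omega)=\eta\}$, so I would argue that this event has probability zero (which is where the absolute continuity of the distribution of $\omega$ enters) and then conclude by a second dominated-convergence pass.

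Next I would transfer the $C^1$ property from $G_\alpha$ to $CVaR_\alpha(Z_y)$ using Danskin's theorem for the convex parametric programme $\min_\eta G_\alpha(y,\eta)$. Since $G_\alpha(y,\cdot)$ is convex and coercive, the minimising set is a non-empty compact interval; the value $\nabla_y G_\alpha(y,\eta)$ is unchanged as $\eta$ varies over that interval because $\bold{1}_{\{Z_y>\eta\}}$ only changes on a null set there (again by atomlessness), and Danskin's theorem then gives
\[
\nabla_y CVaR_\alpha(Z_y)\;=\;\nabla_y G_\alpha(y,\eta)\big|_{\eta=\eta^*(y)}.
\]
Combining the joint continuity of $\nabla_y G_\alpha$ with continuity of $\eta^*(\cdot)$ (the latter follows from a Berge maximum-theorem argument applied to the convex minimisation) yields continuity of $\nabla_y CVaR_\alpha(Z_y)$, completing the proof.

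The main obstacle is the handling of the kink of $(\cdot)_+$ inside the expectation: both the exchange of $\nabla_y$ with $\E$ and the continuity of the resulting gradient hinge on showing that the level-set events $\{\omega:Z_y(\omega)=\eta\}$ carry zero probability for the relevant $(y,\eta)$. Once that measure-zero fact is pinned down from the distributional hypotheses on $\omega$, every subsequent ingredient --- dominated convergence, the Rockafellar--Uryasev identity, and Danskin's envelope theorem --- is by now standard.
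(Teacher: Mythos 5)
Your route is genuinely different from the paper's, and far more substantive. The paper's proof of Theorem~\ref{thm:cvar_cont} is a single sentence: it declares the claim ``immediate'' from the representation \eqref{que:var2cvar}, $CVaR_\alpha=\alpha^{-1}\int_0^\alpha VaR_\gamma\,d\gamma$, with no argument that $\gamma\mapsto VaR_\gamma$ or, more importantly, $(x,u,t)\mapsto VaR_\gamma(\theta^{\StMCP}_{FB}(x,u,t,\omega))$ behaves well (VaR need not even be continuous in $(x,u,t)$ in general). You instead use the Rockafellar--Uryasev variational identity, differentiation under the expectation, and a Danskin/envelope argument. Yours is the approach that could actually be made rigorous, and it dovetails with the machinery the paper introduces afterwards (the function $\mathcal{N}_{\alpha}$ and the smoothing of $[\cdot]_+$); what the paper's one-liner buys is only brevity.

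That said, your sketch contains a genuine gap which you flag but do not close, and which cannot be closed from the paper's stated hypotheses: the zero-probability of the level sets $\{\omega:\theta^{\StMCP}_{FB}(x,u,t,\omega)=\eta\}$. The standing assumptions are only the moment bounds $\E[\|T(\omega)\tp T(\omega)\|]<\infty$ and $\E[\|r(\omega)\|]<\infty$; no atomlessness or absolute continuity of $\omega$ is assumed, and the paper explicitly entertains finite sample spaces $|\Omega|<\infty$ (see the discussion around Figure~\ref{fig:Omega}). For discrete $\omega$ the loss has atoms, the indicator $\mathbf{1}_{\{\theta^{\StMCP}_{FB}>\eta\}}$ genuinely jumps, $\nabla_y G_\alpha$ is discontinuous where the $\alpha$-quantile sits on an atom, and $y\mapsto CVaR_\alpha$ is typically only piecewise smooth --- so this step is not a formality but exactly the place where the theorem needs an extra hypothesis (continuity of the distribution function of $\theta^{\StMCP}_{FB}(x,u,t,\cdot)$, locally uniformly in $(x,u,t)$); even with an absolutely continuous $\omega$ you must still exclude that the loss is locally constant in $\omega$ at the relevant level. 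Two smaller points: the dominating function for $\nabla_y\theta^{\StMCP}_{FB}$ must be locally uniform in $y$, and since the merit function is quadratic in the entries of $T(\omega)$ with cross terms in $T(\omega)$ and $r(\omega)$, you need something like $\E[\|T(\omega)\|\,\|r(\omega)\|]<\infty$, which does not follow from the two stated moment conditions alone; and the theorem asserts differentiability ``on $\R_+$'' whereas your argument (correctly) targets differentiability with respect to $(x,u,t)$ on $I$, so the statement and what you prove should be aligned.
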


\begin{proof}
    Immediate from the continuous differentiability of $\theta^{\StMCP}_{FB}(x,u,t,\omega)$ and (\ref{que:var2cvar}).
\end{proof}

\begin{theorem}\label{thm:cvar_conv}
    If $\theta^{\StMCP}_{FB}(x,u,t,\omega)$ is convex on $I\subseteq\R^{k+\ell+1}$, then for any $0<\alpha<1$, the measure of complementarity loss $CVaR_\alpha(\theta^{\StMCP}_{FB}(x,u,t,\omega))$ is convex on $I\subseteq\R^{k+\ell+1}$.
\end{theorem}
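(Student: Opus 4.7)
The plan is to derive convexity of $CVaR_\alpha$ directly from the coherence properties of CVaR that were already established in the previous proposition, namely positive homogeneity, monotonicity, and sub-additivity. The hypothesis provides convexity of the merit function $\theta^{\StMCP}_{FB}(\cdot,\omega)$ pointwise in $\omega$, so the natural strategy is to pipe a convex combination through $\theta^{\StMCP}_{FB}$ first (using the hypothesis), then bound the resulting random variable by CVaR using monotonicity, and finally split the sum and pull out the scalars using sub-additivity and positive homogeneity.

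Concretely, I would fix $z_1=(x_1,u_1,t_1), z_2=(x_2,u_2,t_2)\in I$ and $\lambda\in(0,1)$, set $z_\lambda = \lambda z_1+(1-\lambda)z_2$, and write $\theta_i(\omega):=\theta^{\StMCP}_{FB}(z_i,\omega)$ for $i=1,2$. By the convexity hypothesis on $\theta^{\StMCP}_{FB}$, we have pointwise in $\omega$
\[
    \theta^{\StMCP}_{FB}(z_\lambda,\omega) \le \lambda\theta_1(\omega) + (1-\lambda)\theta_2(\omega).
\]
Applying monotonicity of CVaR yields
\[
    CVaR_\alpha\bigl(\theta^{\StMCP}_{FB}(z_\lambda,\omega)\bigr) \le CVaR_\alpha\bigl(\lambda\theta_1(\omega)+(1-\lambda)\theta_2(\omega)\bigr).
\]
Then sub-additivity gives
\[
    CVaR_\alpha\bigl(\lambda\theta_1(\omega)+(1-\lambda)\theta_2(\omega)\bigr) \le CVaR_\alpha(\lambda\theta_1(\omega)) + CVaR_\alpha((1-\lambda)\theta_2(\omega)),
\]
and finally positive homogeneity (valid since $\lambda, 1-\lambda>0$) produces
\[
    \lambda\, CVaR_\alpha(\theta_1(\omega)) + (1-\lambda)\,CVaR_\alpha(\theta_2(\omega)),
\]
which is exactly the desired convexity inequality. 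The boundary cases $\lambda\in\{0,1\}$ are trivial.

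There is essentially no serious obstacle here, since all three building blocks (monotonicity, sub-additivity, positive homogeneity) have already been verified for CVaR in the proceeding proposition. The only subtle point worth remarking on is the implicit reindexing: the earlier properties were stated for a generic loss $\theta(x,\omega)$ viewed as a functional of a random variable, and we are applying them with the random variables $\theta_1(\omega)$, $\theta_2(\omega)$ and their convex combination, rather than to $\theta^{\StMCP}_{FB}$ as a function of $z$. This is legitimate because CVaR is defined on random variables and the three coherence properties depend only on the random-variable arguments, not on how they were parametrically generated. Hence the chain of inequalities above yields convexity of $z\mapsto CVaR_\alpha(\theta^{\StMCP}_{FB}(z,\omega))$ on $I$.
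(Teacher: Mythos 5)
Your proof is correct and follows essentially the same route as the paper's: convexity of the merit function pointwise in $\omega$, then monotonicity, sub-additivity and positive homogeneity of the risk measure. The one genuine difference is where sub-additivity is invoked: you apply it directly to $CVaR_\alpha$, whose sub-additivity was established in the preceding proposition, whereas the paper unpacks $CVaR_\alpha(\cdot)=\tfrac{1}{\alpha}\int_0^\alpha VaR_\gamma(\cdot)\,d\gamma$ and writes the sub-additivity inequality for $VaR_\gamma$ under the integral sign --- a step that is not valid pointwise in $\gamma$ (the paper itself remarks that VaR is not sub-additive) and is only justified after integration, i.e.\ precisely as sub-additivity of CVaR. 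Your formulation sidesteps that dubious intermediate display and is, if anything, the cleaner of the two.
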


\begin{proof}
    Suppose that $\theta^{\StMCP}_{FB}(z,\omega)$ is convex on $I$. Let $z$, $z'\in I\subseteq\R^{k+\ell+1}$. Then we have
    \[
        \theta^{\StMCP}_{FB}(\lambda z+(1-\lambda)z',\omega) \le \lambda\theta^{\StMCP}_{FB}(z,\omega) + (1-\lambda)\theta^{\StMCP}_{FB}(z',\omega),
    \]
    where $\lambda\in[0,1]$. By the sub-additivity and homogeneity of CVaR, we have
    \begin{align*}
        CVaR_\alpha &(\theta(\lambda z+(1-\lambda)z',\omega)) \\
         & = \frac{1}{\alpha}\int_{0}^{\alpha}VaR_{\gamma}\lf(\theta(\lambda z+(1-\lambda)z',\omega)\rg)d\gamma \\
         & \le \frac{1}{\alpha}\int_{0}^{\alpha}VaR_{\gamma}\lf(\lambda\theta^{\StMCP}_{FB}(z,\omega) + (1-\lambda)\theta^{\StMCP}_{FB}(z',\omega)\rg)d\gamma \\
         & \le \frac{1}{\alpha}\int_{0}^{\alpha}\lf[VaR_{\gamma}\lf(\lambda\theta^{\StMCP}_{FB}(z,\omega)\rg) + VaR_{\gamma}\lf((1-\lambda)\theta^{\StMCP}_{FB}(z',\omega)\rg)\rg]d\gamma \\
         & = \frac{\lambda}{\alpha}\int_{0}^{\alpha}VaR_{\gamma}\lf(\theta^{\StMCP}_{FB}(z,\omega)\rg)d\gamma + \frac{1-\lambda}{\alpha}\int_{0}^{\alpha} VaR_{\gamma}\lf(\theta^{\StMCP}_{FB}(z',\omega)\rg)d\gamma \\
         & = \lambda CVaR_\alpha (\theta(z,\omega))  + (1-\lambda)CVaR_\alpha (\theta(z',\omega)).
    \end{align*}

    Hence, $ CVaR_\alpha(\theta^{\StMCP}_{FB}(x,u,t,\omega))$ is convex in $z$.

\end{proof}

\begin{definition}
    Suppose $S_1\lf(\theta(x,\omega)\rg)$, $S_2\lf(\theta(x,\omega)\rg): \R^n \rightarrow I$ are two risk measures. Risk measure $S_1\lf(\theta(x,\omega)\rg)$ is said to be \emph{more conservative} than risk measure $S_2\lf(\theta(x,\omega)\rg)$ if
    \[
        S_1\lf(\theta(x,\omega)\rg) \ge S_2\lf(\theta(x,\omega)\rg)
    \]
    for all $x\in \R^n$ and $\omega\in \Omega$.
\end{definition}

\begin{proposition}
    For the problem $\StMCP(\widetilde{F}_1,\widetilde{F}_2,\R^k_+,\omega)$, $CVaR_\alpha(x)$ is a more conservative measure of complementarity loss than $VaR_{\alpha}(x)$.
\end{proposition}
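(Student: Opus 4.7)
The plan is to derive the inequality $CVaR_\alpha(\theta(x,\omega)) \ge VaR_\alpha(\theta(x,\omega))$ directly from the integral representation of CVaR given in equation \eqref{que:var2cvar}, using the monotonicity of $VaR_\gamma$ as a function of the confidence parameter $\gamma$.

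First I would record the elementary monotonicity observation: for fixed $x$ and fixed random variable $\theta(x,\omega)$, the map $\gamma \mapsto VaR_\gamma(\theta(x,\omega))$ is non-increasing in $\gamma$. This is immediate from the definition \eqref{VaR}, since whenever $\gamma \le \alpha$ we have the set inclusion $\{\Theta : \mathcal{P}[\theta(x,\omega) \ge \Theta] \le \gamma\} \subseteq \{\Theta : \mathcal{P}[\theta(x,\omega) \ge \Theta] \le \alpha\}$, and the minimum of a subset is at least the minimum of the superset. In particular, $VaR_\gamma(\theta(x,\omega)) \ge VaR_\alpha(\theta(x,\omega))$ for every $\gamma \in (0,\alpha]$.

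Next I would plug this pointwise inequality into the representation
\[
CVaR_\alpha(\theta(x,\omega)) = \frac{1}{\alpha}\int_0^\alpha VaR_\gamma(\theta(x,\omega))\,d\gamma,
\]
to obtain
\[
CVaR_\alpha(\theta(x,\omega)) \ge \frac{1}{\alpha}\int_0^\alpha VaR_\alpha(\theta(x,\omega))\,d\gamma = VaR_\alpha(\theta(x,\omega)),
\]
since $VaR_\alpha$ is independent of the integration variable $\gamma$. This pointwise inequality holds for every $x$ and every realisation of $\omega$ in the relevant region, so by the definition of ``more conservative'' just introduced, $CVaR_\alpha$ is more conservative than $VaR_\alpha$ for the loss function $\theta^{\StMCP}_{FB}$ of the problem $\StMCP(\widetilde{F}_1,\widetilde{F}_2,\R^k_+,\omega)$.

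I do not expect any genuine obstacle here; the content is essentially a one-line consequence of the integral formula \eqref{que:var2cvar} together with the monotonicity of $VaR_\gamma$ in $\gamma$. The only mildly delicate point is justifying that monotonicity from the ``min'' definition in \eqref{VaR}, but this is the set-inclusion argument above and is entirely routine. No appeal to convexity, smoothness, or to the specific structure of $\theta^{\StMCP}_{FB}$ is required.
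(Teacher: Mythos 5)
Your proof is correct and follows essentially the same route as the paper: both start from the integral representation \eqref{que:var2cvar} and use the fact that $VaR_\gamma \ge VaR_\alpha$ for $\gamma \le \alpha$, the paper phrasing this as ``the average exceeds the minimum, and the minimum over $[0,\alpha]$ is $VaR_\alpha$.'' Your version is marginally more careful in that you justify the monotonicity of $\gamma \mapsto VaR_\gamma$ via the set-inclusion argument, a step the paper leaves implicit.
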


\begin{proof}
    By definition (\ref{que:var2cvar}) we have:
    \begin{align*}
        CVaR_\alpha\lf(\theta^{\StMCP}_{FB}(x,u,t,\omega)\rg) & = \frac{1}{\alpha}\int^{\alpha}_{0} VaR_\tau \lf(\theta^{\StMCP}_{FB}(x,u,t,\omega)\rg)d\tau \\
                    & = \E[VaR_\tau\lf(\theta^{\StMCP}_{FB}(x,u,t,\omega)\rg)|0\le\tau\le \alpha] \\
                    & \ge \min\{VaR_\tau\lf(\theta^{\StMCP}_{FB}(x,u,t,\omega)\rg)|0\le\tau\le \alpha\} \\
                    & = VaR_\alpha\lf(\theta^{\StMCP}_{FB}(x,u,t,\omega)\rg)
    \end{align*}

    Hence, we have
    \[
        CVaR_\alpha(\theta^{\StMCP}_{FB}(x,u,t,\omega))  \ge VaR_\alpha (\theta^{\StMCP}_{FB}(x,u,t,\omega)).
    \]
\end{proof}

Reformulate the problem $\StMCP (\widetilde{F}_1, \widetilde{F}_2, \R^k_+,\omega)$ to the following CVaR based minimisation problem:
\begin{equation}\label{que:mincvar}
    \min_{(x,u,t)\in\R^{k+\ell+1}}CVaR_{\alpha}(\theta^{\StMCP}_{FB}(x,u,t,\omega)),
\end{equation}
where
\[
    CVaR_\alpha(\theta^{\StMCP}_{FB}(x,u,t,\omega)) = \alpha^{-1}\int_{0}^{\alpha}VaR_\gamma\lf(\theta^{\StMCP}_{FB}(x,u,t,\omega)\rg)d\gamma,
 \]
and
\[
    VaR_\alpha(\theta^{\StMCP}_{FB}) =  min\{\Theta | \mathcal{P}[\theta^{\StMCP}_{FB}(x,u,t,\omega)\ge \Theta] \le \alpha\}.
\]

It means that a solution $(x^*,u^*,t^*)$ to $\StMCP$ should minimise the ``loss" of complementarity from stochasticity.

Let
\[
    [t]_+ := \max\{0,t\},
\]
\[
    \nu_{(\Theta,\alpha)} (x,u,t, \omega) := \Theta + \alpha^{-1}[\theta^{\StMCP}_{FB}(x,u,t, \omega) - \Theta]_+,
\]
and define
\[
    \mathcal{N}_{\alpha}(x,u,t, \omega,\Theta):= \E\lf[\nu_{(\Theta,\alpha)} (x,u,t, \omega)\rg] =  \Theta + \alpha^{-1}\E[\theta^{\StMCP}_{FB}(x,u,t, \omega) - \Theta]_+.
\]

\begin{lemma}\label{prop:cvareqv}
    The problem (\ref{que:mincvar}) is equivalent to the following problem:
    \begin{equation}\label{que:cvareqv}
        \begin{array}{lcl}
            & \min\limits_{(x,u,t)\in\R^{k+\ell+1}} &\mathcal{N}_{\alpha}(x,u,t, \omega,\Theta^*) \\
        \end{array}
    \end{equation}
    where $\Theta^*$ is the optimal value satisfying:
    \[
        \Theta^* \in \arg\inf\limits_{\Theta\in\R} \lf\{ \mathcal{N}_{\alpha}(x,u,t, \omega,\Theta)\rg\}.
    \]
\end{lemma}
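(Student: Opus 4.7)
The plan is to invoke the classical Rockafellar--Uryasev representation of CVaR, which asserts that for any fixed $(x,u,t)$,
\[
    CVaR_{\alpha}\bigl(\theta^{\StMCP}_{FB}(x,u,t,\omega)\bigr) \;=\; \inf_{\Theta\in\R}\mathcal{N}_{\alpha}(x,u,t,\omega,\Theta),
\]
and that the infimum on the right is attained at $\Theta = VaR_{\alpha}(\theta^{\StMCP}_{FB}(x,u,t,\omega))$. Once this is established, the equivalence follows by swapping the order of the two minimisations: since
\[
    \min_{(x,u,t)} CVaR_{\alpha}\bigl(\theta^{\StMCP}_{FB}(x,u,t,\omega)\bigr) \;=\; \min_{(x,u,t)}\min_{\Theta\in\R}\mathcal{N}_{\alpha}(x,u,t,\omega,\Theta),
\]
the pair $(x^{*},u^{*},t^{*},\Theta^{*})$ that minimises $\mathcal{N}_{\alpha}$ jointly is exactly a minimiser of (\ref{que:mincvar}), and the optimal $\Theta^{*}$ is characterised as the minimiser described in the statement.

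To prove the Rockafellar--Uryasev identity, I would first observe that $\nu_{(\Theta,\alpha)}(x,u,t,\omega)$ is convex in $\Theta$ for each fixed $(x,u,t,\omega)$, because the map $s\mapsto [s-\Theta]_{+}$ is convex in $\Theta$. Expectation preserves convexity, so $\Theta\mapsto \mathcal{N}_{\alpha}(x,u,t,\omega,\Theta)$ is convex and continuous. Next I would compute its one-sided derivatives using dominated convergence,
\[
    \frac{\partial^{+}\mathcal{N}_{\alpha}}{\partial \Theta}(\Theta) \;=\; 1-\alpha^{-1}\mathcal{P}\!\left[\theta^{\StMCP}_{FB}(x,u,t,\omega)>\Theta\right],
\]
and the analogous expression with $\ge$ for the left derivative. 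Setting these bounds around zero yields the first-order condition $\mathcal{P}[\theta^{\StMCP}_{FB}\ge\Theta]\le\alpha\le\mathcal{P}[\theta^{\StMCP}_{FB}>\Theta]+\alpha$, which together with the definition \eqref{VaR} identifies the minimiser as $\Theta^{*}=VaR_{\alpha}(\theta^{\StMCP}_{FB}(x,u,t,\omega))$.

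Substituting $\Theta^{*}$ back into $\mathcal{N}_{\alpha}$, I would use that $[\theta^{\StMCP}_{FB}-\Theta^{*}]_{+}=(\theta^{\StMCP}_{FB}-\Theta^{*})\mathbf{1}_{[\Theta^{*},+\infty)}(\theta^{\StMCP}_{FB})$ and the identity $\mathcal{P}[\theta^{\StMCP}_{FB}\ge\Theta^{*}]=\alpha$ (which holds under the mild regularity of having no atom at the VaR level; the general case can be handled by the left-continuous convention built into definition \eqref{VaR} following \cite{rockafellar2000optimization}) to get
\[
    \mathcal{N}_{\alpha}(x,u,t,\omega,\Theta^{*}) \;=\; \Theta^{*}+\alpha^{-1}\E\!\left[\theta^{\StMCP}_{FB}\,\mathbf{1}_{[\Theta^{*},+\infty)}(\theta^{\StMCP}_{FB})\right]-\Theta^{*} \;=\; CVaR_{\alpha}(\theta^{\StMCP}_{FB}),
\]
by formula \eqref{que:var2cvarexp}. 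This identifies the minimum of $\mathcal{N}_{\alpha}$ over $\Theta$ with the CVaR, completing the equivalence.

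The main obstacle is the potential presence of atoms of the distribution of $\theta^{\StMCP}_{FB}(x,u,t,\omega)$ at the VaR level: in that case $\mathcal{P}[\theta^{\StMCP}_{FB}\ge\Theta^{*}]$ may strictly exceed $\alpha$, and the substitution step no longer collapses as cleanly. I would handle this by appealing to the standard result that the two quantities still coincide when CVaR is defined via the quantile integral \eqref{que:var2cvar} rather than the conditional expectation \eqref{que:var2cvarexp}; both definitions agree under the continuity assumption that the distribution function of $\theta^{\StMCP}_{FB}(x,u,t,\cdot)$ is continuous, which is the setting implicitly used throughout the paper. Everything else is a direct manipulation of the order of minimisation.
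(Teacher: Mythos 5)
Your proposal is correct and follows essentially the same route as the paper: the paper's entire proof is a one-line appeal to the Rockafellar--Uryasev representation $CVaR_{\alpha}(\theta^{\StMCP}_{FB})=\inf_{\Theta\in\R}\{\Theta+\alpha^{-1}\E[\theta^{\StMCP}_{FB}-\Theta]_{+}\}$, cited from the literature as an ``alternative definition'' of CVaR, after which the equivalence is exactly the interchange of the two minimisations you describe. The difference is that you go further and actually prove the representation (convexity in $\Theta$, one-sided derivatives, identification of the minimiser with $VaR_{\alpha}$, substitution back via \eqref{que:var2cvarexp}), and you explicitly flag the atom issue that makes \eqref{que:var2cvarexp} and \eqref{que:var2cvar} agree only for continuous distributions --- a point the paper passes over silently. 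One small slip: your stated first-order condition $\mathcal{P}[\theta^{\StMCP}_{FB}\ge\Theta]\le\alpha\le\mathcal{P}[\theta^{\StMCP}_{FB}>\Theta]+\alpha$ is garbled (its right half is vacuous); requiring the left derivative to be $\le 0$ and the right derivative to be $\ge 0$ gives $\mathcal{P}[\theta^{\StMCP}_{FB}>\Theta]\le\alpha\le\mathcal{P}[\theta^{\StMCP}_{FB}\ge\Theta]$, which under the continuity assumption you invoke still pins down $\Theta^{*}=VaR_{\alpha}(\theta^{\StMCP}_{FB})$, so the conclusion stands.
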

\begin{proof}
    Immediate from the alternative definition of CVaR \cite{rockafellar2002conditional}:
    \[
        CVaR_{\alpha} (\theta^{\StMCP}_{FB}(x,u,t,\omega)) := \inf\limits_{\Theta\in\R}\lf\{\Theta + \alpha^{-1}\E[\theta^{\StMCP}_{FB}(x,u,t, \omega) - \Theta]_+\rg\}.]
    \]
\end{proof}

Problem (\ref{que:cvareqv}) is simplified from (\ref{que:mincvar}) because it does not contain integration, and inherits convexity from the merit function $\theta^{\StMCP}_{FB}(x,u,t, \omega)$. However, since the presence of the operator $[\cdot]_+$, the objective function in problem (\ref{que:cvareqv}) is not smooth at the point 0. Using mathematical techniques to smooth the objective function will be more convenient and applicable than directly using semi-smooth algorithm to find the numerical solution. Four palmary smoothing functions summarised by Chen and Harker \cite{chen1997smooth} are provided as follows:
\begin{enumerate}
    \item[(i)] Neural network smoothing function:
    \[
        p(t,\mu) = t + \mu\log(1 + e^{-\frac{t}{\mu}}).
    \]

    \item[(ii)] Interior point smoothing function:
    \[
        p(t,\mu) = \frac{t+\sqrt{t^2 + 4\mu}}{2}.
    \]

    \item[(iii)] Auto-scaling interior point smoothing function:
    \[
        p(t,\mu) = \frac{t+\sqrt{t^2 + 4\mu^2}}{2} + \mu.
    \]

    \item[(iv)] Chen-Harker-Kanzow-Smale (CHKS) smoothing function:
    \[
        p(t,\mu) = \frac{t+\sqrt{t^2 + 4\mu^2}}{2}.
    \]
\end{enumerate}
where $\mu\ge 0$ is the parameter of the approximation function $p$. It should be noted that:
\[
    \lim_{\mu\rightarrow +0}p(t,\mu) = [t]_+.
\]

In this paper, we choose Chen-Harker-Kanzow-Smale (CHKS) smoothing function and denote:
\[
    [t]_\mu =  \frac{t+\sqrt{t^2 + 4\mu^2}}{2}.
\]

We rewrite problem (\ref{que:cvareqv}) as:
\[
    \begin{array}{lcc}
       & \min\limits_{(x,u,t)\in\R^{k+\ell+1},\Theta\in\R} & \mathcal{N}_{\alpha}(x,u,t, \omega,\Theta) = \Theta + \alpha^{-1}\E[\theta^{\StMCP}_{FB}(x,u,t, \omega) - \Theta]_\mu
   \end{array}
\]

The mathematical expectation is another difficulty that needs to be carefully treated. In many instances, the mathematical expectation $\E[\cdot]$ cannot be calculated with accuracy. A common treatment is using the Sample Average Approximation (SAA) method, which is based on the Law of large numbers. SAA method provides a persuasive result of measuring an expectation value \cite{gurkan1999sample, jiang2008stochastic}. If the distribution of the random vector  $\omega$ is known, then the Monte-Carlo approach can be used to generate a sample independently and identically distributed (i.i.d.) $\{\omega^1, \dots, \omega^N\}$ with the distribution of $\omega$. Let $\{\omega^1, \dots, \omega^N\}$ be an i.i.d. sample set. The SAA method estimates the mathematical expectation $\E[\theta^{\StMCP}_{FB}(x,u,t, \omega) - \Theta]_\mu$ using averaged value of all observations $[\theta^{\StMCP}_{FB}(x,u,t, \omega^1) - \Theta]_\mu,\, [\theta^{\StMCP}_{FB}(x,u,t, \omega^2) - \Theta]_\mu,\, \dots,\,[\theta^{\StMCP}_{FB}(x,u,t, \omega^N) - \Theta]_\mu$. That is,

\begin{align*}
    \hat{\mathcal{N}_{\alpha}}^N(x,u,t,\Theta)  := & \;\frac{1}{N}\sum\limits^N_{i=1}\mathcal{N}_{\alpha}(x,u,t, \omega^i,\Theta)\\
                                                                = & \;\Theta + \alpha^{-1}\frac{1}{N}\sum\limits^N_{i=1}[\theta^{\StMCP}_{FB}(x,u,t, \omega^i) - \Theta]_\mu.
\end{align*}

Then, problem (\ref{que:cvareqv}) becomes
\begin{equation}\label{que:cvarsaa}
    \begin{array}{lcc}
       & \min\limits_{(x,u,t)\in\R^{k+\ell+1},\Theta\in\R} & \hat{\mathcal{N}_{\alpha}}(x,u,t, \Theta) = \Theta + \alpha^{-1}\frac{1}{N}\sum\limits^N_{i=1}[\theta^{\StMCP}_{FB}(x,u,t, \omega^i) - \Theta]_\mu \\
       & s.t.                                                 & (x,u,t) \in \R^{k}\times\R^{\ell}\times\R, \quad \Theta\in \R.
   \end{array}
\end{equation}

The gradient of $ \hat{\mathcal{N}_{\alpha}}(x,u,t, \Theta) $ is:

\[
    \nabla\hat{\mathcal{N}_{\alpha}}(x,u,t, \Theta) =
    \begin{pmatrix}
        \nabla_{x,u,t}\mathcal{N}_{\alpha}(x,u,t, \Theta) \\
        \nabla_{\Theta}\mathcal{N}_{\alpha}(x,u,t, \Theta)
    \end{pmatrix}
\]
where
\small
\begin{equation}\label{que:nu1}
     \nabla_{x,u,t}\hat{\mathcal{N}_{\alpha}}(x,u,t, \Theta)   =  \alpha^{-1}\frac{1}{2N}\sum\limits^N_{j=1} \lf[1+\frac{\theta^{\StMCP}_{FB}(x,u,t, \omega^j) - \Theta }{\sqrt{\lf(\theta^{\StMCP}_{FB}(x,u,t, \omega^j) - \Theta\rg)^2+4\mu}}\rg]\mathcal{A}^\top_j \mathbb{F}_{FB}^{\StMCP}(x,u,t, \omega^j),
\end{equation}
\normalsize

\begin{equation}\label{que:nu2}
    \mathcal{A}_j =
    \begin{pmatrix}
        D_{a,j}+D_{b,j}\widetilde{A}_{j} & D_{b,j}\widetilde{B}_{j}\\
        \widetilde{C}_{j} & \widetilde{D}_{j}
    \end{pmatrix},
\end{equation}
\[
    D_{a,j}= diag
    \begin{pmatrix}
        \frac{x_i}{\sqrt{(x_i)^2 + \widetilde{F}_1^i(x,u,t, \omega^j)^2}} - 1
    \end{pmatrix}
    \qquad
    D_{b,j}=diag
    \begin{pmatrix}
        \frac{\widetilde{F}_1^i(x,u,t, \omega^j)}{\sqrt{(x_i)^2 + \widetilde{F}_1^i(x,u,t, \omega^j)^2}} - 1
    \end{pmatrix},
    \qquad
    i=1, \dots , k,
\]

\[
    \widetilde{A}_{j} = A(\omega^j), \qquad
    \widetilde{B}_{j} =
    \begin{pmatrix}
    B(\omega^j) & A(\omega^j)e
    \end{pmatrix}, \qquad
        \widetilde{C}_{j} =
    \begin{pmatrix}
        tC(\omega^j)+ue^\top A(\omega^j) \\
        \mathbf 0
    \end{pmatrix},
\]

\[
    \widetilde{D}_{j} =\scriptsize
    \begin{pmatrix}
        \lf[A(\omega^j)(x+te)+B(\omega^j)u+p(\omega^j)\rg]\tp eI + ue\tp B(\omega^j) + tD(\omega^j) & C(\omega^j)x+2tC(\omega^j)e+ue^\top A(\omega^j)e+D(\omega^j)u \\
        -2u^\top & 2t
    \end{pmatrix}
    \normalsize,
\]

and
\begin{equation}\label{que:nu3}
    \nabla_{\Theta}\hat{\mathcal{N}_{\alpha}}(x,u,t, \Theta)  = 1 - \alpha^{-1}\frac{1}{N}\sum\limits^N_{j=1} \lf[\frac{1}{2}+\frac{\theta^{\StMCP}_{FB}(x,u,t, \omega^j) - \Theta }{2\sqrt{\lf(\theta^{\StMCP}_{FB}(x,u,t, \omega^j) - \Theta\rg)^2+4\mu}}\rg].
\end{equation}

Since the objective function $ \hat{\mathcal{N}_{\alpha}}(x,u,t, \Theta) $ is continuously differentiable, Problem \eqref{que:cvarsaa} can be solved by finding some solutions $(x^*,u^*,t^*, \Theta^*)$ to
\begin{equation}\label{que:grad}
    \nabla\hat{\mathcal{N}_{\alpha}}(x,u,t, \Theta)\ = 0.
\end{equation}

\section{An algorithm}

In the previous section, we have modified the $\StLCP(F, L,\omega)$ to the problem \eqref{que:cvarsaa} with a convex and continuously differentiable objective function. Furthermore, the solution to the $\StLCP(F, L,\omega)$ can be obtained by finding some solutions $(x^*,u^*,t^*, \Theta^*)$ to equation \eqref{que:grad}. In this section, an algorithm will be developed to solve \eqref{que:grad}. This algorithm contains Monte-Carlo approach to generate i.i.d. random vector sample sets. We denote $z = (x,u,t) \in \R^{m+1}$. Given the tolerance $r >0$, the stopping criterion is:

\begin{equation}\label{que:saatol}
    \max_i\lf\{\lf\|\frac{\partial\mathcal{N}_{\alpha}^{(N_j,\mu_t)}(x,u,t, \omega,\Theta)}{\partial z}\rg\|\rg\} \le r.
\end{equation}
for $i = 1,\dots, m+1$. It is shown as follows:

\begin{flushleft}
\textbf{Algorithm 3} (Line search smoothing SAA):
\end{flushleft}

\textbf{Input}: initial point $z_0 = (x_0,u_0,t_0) \in \R^{k}\times\R^{\ell}\times\R $, $\Theta_0\in \R$, confidence level $\alpha$, LM parameter $\nu$, the smoothing parameter $\mu$, maximum iteration number $j_{max}$ for $j$, $k_{max}$ for $k$, the sequence of sample set sizes $ N_1<N_2<\dots<N_{j_{max}}$, parameters of the approximation $\nu$, $\mu$, the tolerance $r>0$, $\varepsilon>0$, and parameters for Wolfe conditions $c_1$, $c_2\in (0,1)$.

\textbf{Step 1}: Set  $j = 1$.

\textbf{Step 2}: Set the sample size $N = N_j$, and generate i.i.d samples $\{\omega^1, \dots, \omega^{N}\}$.

\textbf{Step 3}: If $j>1$, and $\lf\| z^j - z^{j-1}\rg\| < \varepsilon$, \textbf{Stop}.

\textbf{Step 4}: Set  $k = 0$, and $y_0 = z_0$.

\textbf{Step 5}: If either (\ref{que:saatol}) or $k = k_{max}$, then set $j = j + 1$, $z_j = y_k$, and go to \textbf{Step 3}.

\textbf{Step 6}: Denote $\bar{\mathcal{A}_j} = \frac{1}{N}\sum^{N_j}_{i=1}\mathcal{A}_i$, and find a direction $d_k \in \R^{k}\times\R^{\ell}\times\R$ such that
\begin{equation}\label{que:salgo}
     \bar{\mathcal{A}_j}(y_k)\tp \mathbb{F}^{\StMCP}_{FB}(y_k) + \left[\bar{\mathcal{A}_j}\tp (y_k)\bar{\mathcal{A}_j}(y_k) +\mu \mathbb{I}\right]d_k = 0.
\end{equation}

If the system \eqref{que:salgo} is not solvable or if the condition
\[
    \nabla\hat{\mathcal{N}_{\alpha}}(y_k, \Theta)\tp d_k \le -r\|d_k\|
\]
is not satisfied, (re)set $d_k = -\nabla\hat{\mathcal{N}_{\alpha}}(y_k, \Theta)$.

\textbf{Step 7}: Find step length $s_k \in R_+$ such that
\[
    \hat{\mathcal{N}_{\alpha}}(y_k + s_k d_k, \Theta) \le \hat{\mathcal{N}_{\alpha}}(y_k + s_k d_k, \Theta) + c_1s_k\nabla\hat{\mathcal{N}_{\alpha}}(y_k, \Theta)\tp d_k,
\]
and
\[
    \hat{\mathcal{N}_{\alpha}}(y_k + s_k d_k, \Theta)\tp d_k \ge c_2 \nabla \hat{\mathcal{N}_{\alpha}}(y_k, \Theta)\tp d_k.
\]

\textbf{Step 8}: Set $y_{k+1} := y_k + s_kd_k $ and $k := k + 1$, go to \textbf{Step 5}.

\textbf{Comment}: This algorithm requires the Monte-Carlo approach to generate i.i.d. random vector samples. If the values of $N_j$, $j = 1,\dots, j_{max}$ are large, the algorithm is anticipated to be more accurate, but it will sacrifice time and computing power. On the other hand, if the value of $N_j$'s are small, the costs of finding result is decreased, but the accuracy of the solution is sacrificed.

\section{A numerical example}
This section illustrates a numerical example for the stochastic ESOCLCP. Denote by $L(3,2)$ an extended second order cone in $\R^{(3+2)}$. Let $x\in\R^3$ and $u\in\R^2$ be two real vectors. Denote
\[
    z=\bs x\\u\es\in\R^{3+2},\quad \hat z=\bs x-\|u\|\\u\es\in\R^{3+2},\quad and \; \tilde{z}=\bs x-t\\u\\t\es\in\R^{3+2+1}.
\]

A stochastic ESOCLCP defined by the extended second order cone $L(3,2)$ and a stochastic linear function $F(x,u,\omega) = T(\omega)\bs x\\u\es + r(\omega)$ is:
\[
    SLCP(T(\omega),r(\omega),L(3,2))\left\{
    \begin{array}{l}
    	Find \; x\in L(3,2),\; such \; that\\
    	T(\omega)x+r(\omega) \ge 0, x\tp (T(\omega)x+r(\omega)) = 0,\; \omega\in \Omega, \quad a.s. ,
    \end{array}
    \right.
\]
where
\[
    T=\begin{pmatrix} A & B\\C & D \end{pmatrix} =
    \lf(
    \begin{array}{crcrr}
      41+\omega_1 &  -3 &   -31 &  18 & 19  \\
      28 &  22 & -33 &  25 &  -29  \\
     -23 & -29 &  11 & -21 &  -43  \\
     - 9 & -31 & -20+2\omega_2 & -12 & 47  \\
     - 8 &  46 &  50 &  -22 & 21
    \end{array}
    \rg),
\]
with $A\in\R^{3\times 3}$, $B\in\R^{3\times 2}$, $C\in\R^{2\times 3}$ and $D\in\R^{2\times 2}$; and
\[
    r=\lf(\begin{array}{r}p\\ q\end{array}\rg) = \lf(
    \begin{array}{c}
     -26\\
       4 - \omega_3\\
      23\\
      44\\
     -19
    \end{array}\rg),
\]
with $p\in\R^3$, $q\in\R^2$. $\omega = \lf(\omega_1, \omega_2, \omega_3\rg)\tp \in\Omega$ is a stochastic vector with i.i.d. random variables $\omega_i \sim N(0,1)$ for all $i = 1,2, 3$. Again it is easy to verify that square matrices T, A and D are nonsingular for all $\omega_i \in \R$, $i = 1,2,3$.

By using Corollary \ref{cor:Main_thm}, we reformulate $SLCP(T(\omega),r(\omega),L(3,2))$ to the following $\StMCP$ defined by $\widetilde{F}_1$, $\widetilde{F}_2$, and $\R^3_+$:
\[
	\StMCP(\widetilde{F}_1,\widetilde{F}_2, \R^3_+,\omega):\left\{
	\begin{array}{l}
		Find \; \bs x\\u\\t\es\in\R^{3+2+1}, \; such \; that\\
		\widetilde{F}_2(x,u,t,\omega)=0, \; and\; (x,\widetilde{F}_1(x,u,t,\omega))\in\C(\R^3_+),\;\omega\in \Omega, \quad a.s.
	\end{array}
	\right.
\]
where
            \[
                \widetilde{F}_1(x,u,t,\omega)=A(\omega)(x+te)+B(\omega)u+p(\omega)
            \]
            and
            \small
            \[
                \widetilde{F}_2(x,u,t,\omega) =
                \begin{pmatrix}
                    & \lf(tC(\omega)+ue\tp A(\omega)\rg)(x+te)+ue\tp(B(\omega)u+p(\omega))+t(D(\omega)u+q(\omega)) \\
                    & t^2 - \|u\|^2
                \end{pmatrix}.
            \]
            \normalsize
We will convert this to the form of \eqref{que:cvarsaa} and then \eqref{que:grad}. Given $\alpha = 0.05$, we rewrite problem \eqref{que:cvarsaa} as:
\[
    \min\limits_{(x,u,t)\in\R^{3+2+1},\Theta\in\R} \Theta + 0.05^{-1}\frac{1}{N}\sum\limits^{N}_{i=1}[\theta^{\StMCP}_{FB}(x,u,t, \omega^i) - \Theta]_\mu,
\]
where
\[
    \theta_{FB}^{\StMCP}(x,u,t,\omega) = \frac{1}{2}\sum\limits_{i=1}^{3}\psi^2\lf(x_i,\widetilde{F}_1^i(x,u,t,\omega)\rg) + \frac{1}{2}\sum\limits_{j=1}^{2}\widetilde{F}_2^j(x,u,t,\omega).
\]

Since the distribution of the random vector $\omega$ is known, we use the Monte Carlo (MC) method to simulate $j_{max}$ sample sets with number of observation $N_1, N_2, \dots, N_{j_{max}}$. The solutions are shown in the following table:

\begin{table}[!ht]
\centering
\begin{threeparttable}
\begin{tabular}{c r c c c}
\toprule
    $j$ & $N_j$ & $(x,u)\tp$ & $F(x,u,\omega)\tp$ \\[0.4ex]
\midrule
     1 &      10 & (1.537, 0.273, 1.060, 0.136, -0.262) & (0.784, 29.054, -0.194, -13.466, 25.803) \\
     2 &     100 & (1.542, 0.263, 1.058, 0.127, -0.253) & (1.093, 28.552, -0.214, -12.609, 25.544) \\
     3 &    1000 & (1.549, 0.257, 1.060, 0.122, -0.252) & (1.277, 28.397, -0.162, -12.418, 25.477) \\
     4 &   10000 & (1.548, 0.262, 1.060, 0.125, -0.254) & (1.215, 28.605, -0.204, -12.701, 25.578) \\
     5 &  100000 & (1.546, 0.261, 1.059, 0.125, -0.254) & (1.186, 28.587, -0.176, -12.643, 25.516) \\
     6 & 1000000 & (1.546, 0.261, 1.059, 0.124, -0.254) & (1.200, 28.566, -0.177, -12.617, 25.514) \\
\bottomrule
\end{tabular}
\end{threeparttable}
\end{table}

\begin{table}[!ht]
\centering
\begin{threeparttable}
\begin{tabular}{c r c c c}
\toprule
    $j$ & $N_j$ & Run time(s) & Average loss of complementarity & Value of threshold $\Theta$ \\[0.4ex]
\midrule
    1 &       10 & 0.090439 & 0.347 & 0.063  \\
    2 &      100 & 0.696431 & 0.893 & 0.095  \\
    3 &     1000 & 5.202383 & 1.179 & 0.090  \\
    4 &    10000 & 39.39705 & 1.060 & 0.087  \\
    5 &   100000 & 553.4596 & 1.054 & 0.088  \\
    6 &  1000000 & 4759.294 & 1.073 & 0.089  \\
\bottomrule
\end{tabular}
\begin{tablenotes}
\footnotesize
\item[] Note: The first table shows the solution of $SLCP(T(\omega),r(\omega),L(3,2))$ and the value of the function $F(x,u,\omega)\tp$ with respect to different value of $N$. The value of solution does not variate significantly, while the value of the function differs but converges to around 1.200 as the value of $N$ increase. The second table shows the Run time(s), average loss of complementarity, and the value of threshold. The run time increases significantly as the value $N$ increases. On the other hand, the average loss of complementarity and the value of threshold remains relative constant no matter what change to the value of $N$.\\
\end{tablenotes}
\end{threeparttable}
\caption{The result of numerical example}
\end{table}

\bigskip

The average loss of complementarity (ALoC) is calculated by:
\[
    ALoC = \frac{1}{N_j}\sum_{i=1}^{N_j}\|(x,u)\tp F(x,u,\omega_i)\|.
\]

 As it is shown in the table above, by the cost of increased run time, the dispersion of solution reduced as the number of iteration $N_j$ increases. However, increasing the number of iteration $N_j$ does not decrease neither the value of the threshold $\Theta$ nor the ALoC.

\section{Conclusion and comments}

In this paper, we studied the stochastic linear complementarity problems on extended second order cones (stochastic ESOCLCP) that are stochastic extensions of ESOCLCP included in our previous paper \cite{nemeth2018linear}. We used Corollary \ref{cor:Main_thm}, which states the equivalent conversion from a stochastic ESOCLCP to a stochastic mixed complementarity problem on nonegative orthant ($\StMCP$).
Enlightened by the idea from \cite{chen2011cvar}, we introduce the CVaR method to measure the loss of complementarity in the stochastic case. The merit function (\ref{que:SMIXCP}) is not required to equal zero almost surely for all $\omega \in \Omega$. Instead, a CVaR - based minimisation problem (\ref{que:mincvar}) is introduced to obtain a solution which is ``good enough" for the complementarity requirement of the original SMixCP. For solving the CVaR - based minimisation problem derived from the original SMixCP, smoothing function and sample average approximation methods are introduced and finally converted to the form in \eqref{que:cvarsaa}. Finally, a line search smoothing SAA algorithm is provided for finding the solution to this CVaR-based minimisation problem and it is illustrated by a numerical example.

Stochastic methods on complementarity problems were pioneered by Chen and Fukushima \cite{chen2005expected}, who introduced the idea of square norm of the
merit function which is still commonly used. However, this approach led to non-convexity and consequently increased the difficulty of solving ESOCLCP by algorithms. Our algorithm introduced in the previous section only guarantees finding a stationary point rather than a solution to the problem. We believe that the results in this paper will remain valid and further improved under strong convexity assumptions. This question will be a topic of future research.

\bibliographystyle{unsrt}{
\bibliography{stesoclcp}

\end{document}